\documentclass[11pt,twoside, leqno]{article}

\usepackage{amsthm}
\usepackage{amssymb}
\usepackage{graphics,color}
\usepackage{amsmath}
\usepackage{mathrsfs}
\usepackage{txfonts}
\usepackage{graphics,color}
\usepackage[Symbol]{upgreek}

\allowdisplaybreaks \pagestyle{myheadings}
\pagestyle{myheadings}\markboth{{\sc A. Clop, R. Jiang, J. Mateu and J. Orobitg}} {\sc
Non-smooth flow}

\textwidth=15cm \textheight=21.0cm \oddsidemargin 0.45cm
\evensidemargin 0.45cm

\parindent=13pt

\def\rr{{\mathbb R}}
\def\rn{{{\rr}^n}}

\def\cn{{\mathbb N}}

\def\supp{{\mathop\mathrm{\,supp\,}}}

\def\loc{{\mathop\mathrm{\,loc\,}}}

\def\ez{\epsilon}

\def\gz{{\gamma}}

\def\vz{\varphi}

\def\hs{\hspace{0.3cm}}

\def\div{\mathrm{div}}
\def\Exp{\mathrm{Exp}}

\def\Exp{\mathrm{Exp}}

\newtheorem{thm}{Theorem}[section]
\newtheorem{lem}[thm]{Lemma}
\newtheorem{prop}[thm]{Proposition}
\newtheorem{rem}[thm]{Remark}

\newtheorem{defn}[thm]{Definition}

\numberwithin{equation}{section}

\begin{document}
\arraycolsep=1pt
\author{} \arraycolsep=1pt
\arraycolsep=1pt

\title{\bf\Large Flows for non-smooth vector fields \\with subexponentially integrable
divergence
\footnotetext{\hspace{-0.35cm} 2010 \emph{Mathematics Subject
Classification}. Primary 35F05; Secondary 35F10.
\endgraf
{\it Key words and phrases}. non-smooth flow, transport equation, Sobolev vector fields, divergence
}}
\author{\it Albert Clop, Renjin Jiang, Joan Mateu \& Joan Orobitg}
\date{}

\maketitle

\begin{center}
\begin{minipage}{12cm}\small
{\noindent{\bf Abstract} } In this paper,  we study flows associated to Sobolev vector fields with subexponentially integrable divergence. Our
approach is based on the transport equation following DiPerna-Lions \cite{dl89}.
A key ingredient is to use a quantitative estimate of solutions to the Cauchy problem of transport equation
to obtain the regularity of density functions.
\end{minipage}

\end{center}


\section{Introduction}

\noindent

Since the fundamental work by DiPerna-Lions \cite{dl89}, the study of flows associated to non-smooth vector fields has attracted intensive interest,
and has been found many applications in PDEs. The problem can be formulated as follows. Given a Sobolev (or more generally BV) vector field
$b:[0,T]\times \rn\to\rn$, does there exist a unique Borel map $X:[0,T]\times \rn\to\rn$, such that
\begin{equation}\label{ODE}
\dfrac{\partial }{\partial t}X(t,x)=b(t,X(t,x))
\end{equation}
for a.e. $x\in\rn$? If this ODE is well-posed, then how about the regularity of the solution $X$?\\
\\
In the seminal work by DiPerna and Lions \cite{dl89}, the existence of flows for Sobolev velocity fields with bounded divergence was established.
Their main ingredient was a careful analysis of the well posedness of the initial value problem for the linear transport equation,
\begin{equation}\label{transportPDE}
\begin{cases}
\dfrac{\partial u}{\partial t}+b\cdot \nabla u=0 & (0,T)\times\,\rn,\\
      u(0,\cdot)=u_0 &  \rn.
      \end{cases}
\end{equation}
In their arguments, the notion of renormalized solution was shown to be essential. Later, Ambrosio \cite{a04} extended the renormalization property
to the setting of bounded variation ($BV$) vector fields, and obtained the non-smooth flows by using some new tools from Probability and Calculus
of Variations. Crippa and De Lellis \cite{cdl08} used a direct approach to recover DiPerna-Lions' theory; see also Bouchut and Crippa \cite{bc13}.
Recently, in \cite{acf14}, Ambrosio, Colombo and Figalli developed a purely local theory on flows for non-smooth vector fields as a natural analogy
of the Cauchy-Lipschitz approach. \\
\\
Continuing our previous work about the transport equation \cite{cjmo}, in this paper we are concerned with the existence of flows for Sobolev
vector fields having sub-exponentially integrable divergence. Let us review some developments in this spirit. In \cite{d96}, Desjardins showed
existence and uniqueness of non-smooth flows for velocity fields having exponentially integrable divergence. Later, Cipriano and Cruzeiro
\cite{cc05} analyzed the flows for Sobolev vector fields with exponentially integrable divergence in the setting of Euclidean spaces equipped with
Gaussian measures; see \cite{af09} for related progresses in Wiener spaces.\\
\\
As already noticed in \cite{cc05,af09}, when the divergence of the velocity field is not bounded, the solution $X(t,\cdot)$ of equation \eqref{ODE}
still induces a quasi-invariant measure. This motivates the following definition.

\begin{defn}\label{defi}
Let $b: [0,T]\times \rn\to\rn$ be a Borel vector field, and $X,\,\tilde X: [0,T ]\times [0,T]\times  \rn\to\rn$ be Borel maps.
\begin{itemize}
\item[(i)] We say that $X$ is a forward flow associated to $b$ if for each $s\in [0,T]$ and almost every $x\in\rn$  the map
$t\mapsto\,|b(t, X(s, t, x))|$ belongs to $L^1(s,T )$ and
$$X(s,t, x) = x +\int_s^t b(r,X(s,r,x))\,dr.$$
We say that $\tilde X$ is a backward flow associated to $b$ if for each $t\in [0,T]$ and almost every $x\in\rn$  the map
{$s\mapsto\,|b(s, \tilde X(s, t, x))|$} belongs to {$L^1(0,t)$} and
$$\tilde X(s,t, x) = x -\int_s^t b(r,\tilde X(r,t,x))\,dr.$$
\item[(ii)] We say that $X$ is a regular flow associated to $b$ if:
\begin{enumerate}
\item $X$ is either a forward or a backward flow associated to $b$;
\item for  $0\le s\le t\le T$ the image measure $X(s,t,\cdot)_{\#}\,dx$ is absolutely continuous with respect to the Lebesgue measure $\,dx$.
\end{enumerate}
\item[(iii)]  We say that a forward flow $X$ associated with $b$ has the semigroup structure if for all $0\le r\le s\le t\le T$,
 it holds that
 $$X(s,t,X(r,s,x))=X(r,t,x), \ a.e.\ x\in\rn.$$
 We say that a backward flow $\tilde X$ associated with $b$ has the semigroup structure if for all $0\le r\le s\le t\le T$,
 it holds that
 $$\tilde X(r,s,\tilde X(s,t,x))=\tilde X(r,t,x), \ a.e.\ x\in\rn.$$
\end{itemize}
\end{defn}

\noindent
In this paper, we study regular flows as defined above. As in \cite{cc05}, in our arguments  sometimes it will be convenient to replace the Lebesgue
measure $dx$ by the Gaussian measure $\mu$ on $\rn$, i.e.,
$$\,d\mu(x)=\frac{1}{(2\pi)^{n/2}}\exp\left\{-\frac{|x|^2}{2}\right\}\,dx.$$
The distributional divergence of  a vector field $b$ with respect to the measure $\mu$ is then defined via
$$\mathrm{div}_\mu b(x)=\mathrm{div}\,b(x)-x\cdot b(x), \quad \forall\,x\in\rn,$$
that is, $\mathrm{div}_\mu$ is the adjoint of the gradient operator with respect to the measure $\mu$. This operator appears to be useful, among
other reasons because it commutes with the Ornstein-Uhlenbeck smoothing semigroup \cite{cc05, af09}. \\
\\
Our main result deals with existence and uniqueness of a regular flow for non-smooth vector fields with subexponentially integrable
divergence. Due
to the scheme of the proof, we found it convenient to state it in two steps. First, { for all $s\ge 0,$} we state the existence and uniqueness of
a flow for which all
$t$-advance maps {$X(s,t,\cdot)$} leave the Gaussian measure quasi-invariant, together with a quantitative estimate of this fact.
Secondly, we state that
the Lebesgue measure is also quasi-invariant, so that the flow we have found is indeed a regular flow. Moreover, we also state the semigroup
structure of the flow. The precise statement is as follows.

\newtheorem{mthm}{Main Theorem}
\renewcommand\themthm{}

\begin{mthm}
Let $b\in L^1(0,T;W^{1,1}_{\mathrm{loc}})$ satisfying
\begin{equation}\label{hyp-b-1}
\frac{|b(t,x)|}{1+|x|\log^+(|x|)}\in L^1(0,T;L^\infty),
\end{equation}
and
\begin{equation}\label{hyp-b-2}
\div_\mu b\in   L^1\left(0,T; \Exp_\mu\left(\frac{L}{\log L}\right)\right).
\end{equation}
Then the following statements hold.
\begin{enumerate}
\item[(a)] There exist a forward flow $X(s,t,x)$ and a backward flow  $\tilde X(s,t,x)$,  associated to $b$, which are unique
in the sense that,  for  $0\le s\le t\le T$:

(i)  $X(s,t,\tilde X(s,t,x))=\tilde X(s,t,X(s,t,x))=x,\ a.e. \,x\in \rn;$

(ii) the image measures $X(s, t,\cdot)_{\#}d\mu $ and $\tilde
X(s,t,\cdot)_{\#}d\mu$ are absolutely continuous with respect to
$d\mu$, and
$$\frac{d }{d\mu}\,(X(s,t,\cdot)_{\#}d\mu)=\exp\left\{\int_s^t -\mathrm{div}_\mu\,b(r,\tilde X(r,t,x))\,dr\right\}
\in L^{\Phi_\alpha}(\mu)\hspace{.3cm}\text{ for every
}0<\alpha<\alpha_0(s,t), $$
$$\frac{d }{d\mu}\,(\tilde X(s,t,\cdot)_{\#}d\mu)=\exp\left\{\int_s^t \mathrm{div}_\mu\,b(r,X(s,r,x))\,dr\right\}
\in L^{\Phi_\alpha}(\mu)\hspace{.3cm}\text{ for every
}0<\alpha<\alpha_0(s,t), $$
 where $\Phi_\alpha(r)=r\exp\{[\log^+(r)]^\alpha\}$ and
$\alpha_0(s,t)=\exp\left\{-16e^2\int_s^t\|\mathrm{div}_\mu
b(r,\cdot)\|_{\Exp_\mu(\frac{L}{\log L})}\,dr\right\}.$

\item[(b)] The unique flows $X(s,t,x)$ and $\tilde X(s,t,x)$ given in (a) are regular and have semigroup structure.
\end{enumerate}
\end{mthm}

\noindent
It is worth mentioning here that, under condition  \eqref{hyp-b-1}, the assumption \eqref{hyp-b-2} is equivalent to
$$\div b\in   L^1\left(0,T; \Exp_\mu\left(\frac{L}{\log L}\right)\right).$$
Concerning the optimality of \eqref{hyp-b-2}, it was proven in \cite[Section 6]{cjmo} that for every $\gamma>1$ there exists a velocity field $b$
with
\begin{equation}\label{optimal}
\div b\in L^1\left(0,T; \Exp_\mu\left(\frac{L}{\log^\gamma L}\right)\right)
\end{equation}
for which \eqref{ODE} admits infinitely many solutions $X$ satisfying (i) and (iii) in Definition \ref{defi}. However, we do not know if
\eqref{optimal} is sufficient or not to guarantee existence and uniqueness of solutions $X$ satisfying (i), (ii) and  (iii) in Definition \ref{defi}.\\
\\
Towards the proof of the Main Theorem, the main ingredient is the
following \textit{a priori} quantitative estimate for the density
function {$\frac{d }{d\mu}\,(X(s,t,\cdot)_{\#}d\mu)$}.
\begin{thm}\label{subexp-cr}
Let $b(t,\cdot)\in C^2(\rn)$ for each $t\in [0,T]$ and  satisfy
\eqref{hyp-b-1} and \eqref{hyp-b-2}. Then for $0\le s\le t\le T$, there exists a unique flow
$X(s,t,x)$ such that
$$\dfrac{\partial X(s,t,x)}{\partial t}=b(t,X(s,t,x)), \qquad  {X(s,s,x)=x.}$$
Moreover, for $0<\alpha<\exp\left\{-16e\int_s^t\beta(r)\,dr\right\}$,
$\beta(r)=\|\mathrm{div}_\mu b(r,\cdot)\|_{\Exp_\mu(\frac{L}{\log
L})}$,  the density function
{$K_{s,t}(x)=\frac{d}{d\mu}(X(s,t,x)_\#d\mu)$} belongs to
$L^{\Phi_\alpha}(\mu)$, and satisfies
\begin{equation}\label{est-jac}
\int_{\rn} {\Phi_\alpha}(K_{s,t}(x))\,d\mu(x)\le C(\alpha,s,t,
\div_\mu\,b).
\end{equation}
\end{thm}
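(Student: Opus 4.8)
The plan is to reduce \eqref{est-jac} to an ODE inequality for the Orlicz norm of $K_{s,t}$ driven by $\beta$, exactly in the spirit of the transport-equation estimates from \cite{cjmo}. Since $b(t,\cdot)\in C^2$ and satisfies the growth bound \eqref{hyp-b-1}, the classical Cauchy--Lipschitz theory (applied on balls, with the log-growth controlling escape to infinity) gives a unique smooth flow $X(s,t,\cdot)$, which is a $C^1$ diffeomorphism of $\rn$ for each fixed $s\le t$; uniqueness is immediate from this. The density $K_{s,t}$ is then given explicitly by the change-of-variables formula: writing $\tilde X(r,t,\cdot)$ for the inverse (backward) flow, one has the classical Liouville identity
\[
K_{s,t}(x)=\exp\left\{\int_s^t -\div_\mu b\bigl(r,\tilde X(r,t,x)\bigr)\,dr\right\},
\]
which follows by differentiating the Jacobian of $\tilde X$ with respect to $\mu$ along the flow. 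So the whole problem is to estimate the $L^{\Phi_\alpha}(\mu)$-size of an exponential of a time integral of a function in $\Exp_\mu(\tfrac{L}{\log L})$, composed with a measure-preserving-up-to-density map.

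The key step is the differential inequality. Fix $s$ and set $\varphi(t)=\|K_{s,t}\|_{L^{\Phi_{\alpha(t)}}(\mu)}$ where $\alpha(t)$ is allowed to decay in $t$ as in the statement; I expect that $K_{s,t}$ solves the continuity equation $\partial_t K_{s,t}+\div_\mu(b\,K_{s,t})=0$ in the weak sense with respect to $\mu$, i.e. $\partial_t K_{s,t}+b\cdot\nabla K_{s,t}=-(\div_\mu b)\,K_{s,t}$. Testing this against $\Phi_\alpha'(K_{s,t})$ and using that $b\cdot\nabla$ is (formally) skew-adjoint for $d\mu$ up to the divergence term, one gets
\[
\frac{d}{dt}\int_{\rn}\Phi_\alpha(K_{s,t})\,d\mu
=\int_{\rn}\bigl(\Phi_\alpha(K_{s,t})-K_{s,t}\Phi_\alpha'(K_{s,t})\bigr)\div_\mu b\,d\mu
-\dot\alpha\int_{\rn}\partial_\alpha\Phi_\alpha(K_{s,t})\,d\mu.
\]
Because $\Phi_\alpha(r)=r\exp\{[\log^+ r]^\alpha\}$, one computes $r\Phi_\alpha'(r)-\Phi_\alpha(r)=\alpha[\log^+ r]^{\alpha-1}\Phi_\alpha(r)$, which for $r$ large is a genuine gain of a factor $[\log^+ r]^{\alpha-1}$ — precisely the gap between $\Phi_\alpha$ and the Young function $r\mapsto r[\log^+ r]^{1-\alpha}$ dual, in the relevant sense, to $\Exp_\mu(\tfrac{L}{\log L})$. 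Applying the Orlicz Hölder inequality
\[
\int_{\rn}\alpha[\log^+ K_{s,t}]^{\alpha-1}\Phi_\alpha(K_{s,t})\,|\div_\mu b|\,d\mu
\le C\,\beta(t)\,\int_{\rn}\Phi_\alpha(K_{s,t})\,d\mu\cdot(\text{log correction}),
\]
together with a careful choice $\dot\alpha/\alpha\sim -C\beta(t)$ to absorb the borderline logarithmic loss into the $\dot\alpha$ term, yields a closed inequality of the form $\frac{d}{dt}\log\varphi(t)\le C\beta(t)$. Integrating from $s$ to $t$, with $\varphi(s)=\|1\|=1$, gives \eqref{est-jac} with the stated constant, and the admissible range $0<\alpha<\exp\{-16e\int_s^t\beta\}$ comes out of keeping $\alpha(t)>0$ along the way (the $16e$ absorbing the Orlicz-norm constants and the factor $e$ from $[\log^+ r]^{\alpha-1}\le e$-type bounds). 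I would quote the relevant Orlicz-space duality and the John--Nirenberg-type self-improvement for $\Exp_\mu(\tfrac{L}{\log L})$ from \cite{cjmo} rather than reprove them.

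The main obstacle is the borderline nature of the pairing: $\Exp_\mu(\tfrac{L}{\log L})$ and the target scale $L^{\Phi_\alpha}(\mu)$ are at the critical threshold, so the naive Hölder pairing loses exactly a logarithmic power and does not close. The trick of letting the Orlicz index $\alpha$ decay in time is what converts this loss into something summable; making that bookkeeping rigorous — in particular showing the $-\dot\alpha\,\partial_\alpha\Phi_\alpha$ term has the right sign and dominates the deficit uniformly in the size of $K_{s,t}$ — is the delicate point, and is where condition \eqref{hyp-b-2} (rather than a weaker $\tfrac{L}{\log^\gamma L}$ with $\gamma>1$) is used in an essential way. A secondary technical point is justifying the formal skew-adjointness computation and the differentiation under the integral sign: here the $C^2$ hypothesis on $b$ and the growth bound \eqref{hyp-b-1}, which guarantee that $K_{s,t}$ and its logarithm have enough integrability against $\mu$, do the job, and one may first work on a large ball and then let the radius tend to infinity.
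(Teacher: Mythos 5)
Your route is correct in spirit but genuinely different from the paper's. The paper does not attempt a direct Gr\"onwall estimate on $K_{s,t}$; instead it fixes $s_0\le t_0$, decomposes into level sets $E_k=\{2^{k-1}<K_{s_0,t_0}\le 2^k\}$, verifies that $u(s,\cdot)=\chi_{E_k}(X(s,t_0,\cdot))$ is the unique bounded solution of the backward transport equation with final datum $\chi_{E_k}$, invokes the quantitative transport estimate of Theorem \ref{quant} to compare $\|u(s_0,\cdot)\|_{L^1(\mu)}=\int_{E_k}K_{s_0,t_0}\,d\mu\ge 2^{k-1}\mu(E_k)$ with $\|\chi_{E_k}\|_{L^1(\mu)}=\mu(E_k)$, deduces a super-polynomial decay of $\mu(E_k)$ in $k$, and sums. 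All of the delicate borderline Orlicz calculus is thereby confined to Theorem \ref{quant}, whose hypotheses are tailor-made for characteristic functions (the $L^\infty$-bound $M$ is automatically $1$). Your plan instead tests the continuity equation for $K_{s,t}$ directly against $\Phi_\alpha'(K_{s,t})$ and closes an ODE inequality with a time-decaying Orlicz index $\alpha(t)$. The underlying structure you describe is real, but be careful: the Young complement of $\Exp_\mu(\frac{L}{\log L})$ is $L\log L\log\log L(\mu)$ as in Lemma \ref{duality}, not $L[\log L]^{1-\alpha}$ as you state; what makes your absorption work is that this Young function evaluated at $[\log^+K]^{\alpha-1}\Phi_\alpha(K)$ is comparable to $\Phi_\alpha(K)[\log^+K]^\alpha\log\log^+K=\partial_\alpha\Phi_\alpha(K)$, which the $\dot\alpha$-term (entering with sign $+\dot\alpha\int\partial_\alpha\Phi_\alpha(K)\,d\mu$, not $-\dot\alpha$) can absorb once $\dot\alpha/\alpha\sim-C\beta(t)$. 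The price of the direct route is that you essentially re-derive the Orlicz--Gr\"onwall estimate which is the technical heart of \cite{cjmo} (including the nonhomogeneous scaling of Young's inequality on the $\Exp$-side, which makes the constant tracking genuinely nontrivial); the paper's level-set reduction packages all of that once and for all inside Theorem \ref{quant}. Both routes should land on the same qualitative conclusion, but matching the paper's constant $16e$ from your ODE needs care.
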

Such estimate is established by means of a quantitative bound for
solutions to a Cauchy problem for the transport equation; see
Theorem \ref{quant} below. The use of this quantitative bound gives
a natural estimate of the density function. Moreover, as a
byproduct, our proof improves the integrability of the image measure
{$X(s,t,\cdot)_{\#}d\mu$}
 when $\div_\mu b$ is assumed to be exponentially integrable; see Theorem \ref{exp-cr} below and \cite{cc05,af09}.\\
\\
As it was for DiPerna and Lions scheme, well-posedness of the Cauchy problem \eqref{transportPDE} is an essential tool in our arguments. For
Sobolev vector fields $b$ satisfying the classical growth condition  $\frac{|b(t,x)|}{1+|x|}\in L^1(0,T; L^1)+L^1(0,T; L^\infty)$ and
$$\div b\in  L^1(0,T; L^\infty) + L^1\bigg(0,T; \Exp\bigg(\frac{L}{\log L}\bigg)\bigg),$$
the well-posedness of \eqref{transportPDE} in $L^\infty$ was established in \cite[Theorem 1]{cjmo}. Unfortunately, our Main Theorem does not
cover the assumption $\frac{|b(t,x)|}{1+|x|}\in L^1(0,T; L^1)$, and indeed we do not know if a flow does exist in this case. However, the assumption
on $\div\,b$ in the Main Theorem (also in Theorem \ref{tran-main} below) is less restrictive than it was in \cite[Theorem 1]{cjmo}. In other words,
our Theorem \ref{tran-main} about the well-posedness of \eqref{transportPDE} in $L^\infty$ slightly improves \cite[Theorem 1]{cjmo}. A similar
situation is given in Theorem \ref{quant-exp}, see Section 2 for details. \\
\\
From the result by Ambrosio-Figalli \cite{af09}, it looks like our requirements on the growth condition on $b$
are somehow natural. { Since }the image measure $X(s,t,\cdot)_{\#}d\mu$ is only slightly beyond $L^1$ integrable,
and to guarantee $b(t,X(s,t,x))\in L^1(s,T;L^1_\loc)$, we need to require that $b$ has at least exponential integrability.\\
\\
The paper is organized as follows.  In Section 2 we present the quantitative estimate of solutions to the transport equation (Theorem \ref{quant}),
and in Section 3, we use such estimate to deduce a priori estimate of the density function (Theorem \ref{subexp-cr}). In section 4, we give the proof
of part (a) of the Main Theorem. In the final section, we prove part (b) of the Main Theorem and give a stability result concerning the flows.
Throughout the paper, we denote by $C$ positive constants which are independent of the main parameters, but which may vary from line to line.

\section{Well-posedness of the transport equation in the Gaussian setting}

\hskip\parindent
We will need to use some Orlicz spaces and their duals. For the reader's convenience,
we recall here some definitions. See the monograph \cite{rr91} for the general theory of Orlicz spaces. Let
$$P:[0,\infty)\mapsto [0,\infty),$$
be an increasing homeomorphism onto $[0,\infty)$, so that $P(0)=0$ and $\lim_{t\to\infty}P(t)=\infty$. The Orlicz space $L^P$ is the set of
measurable functions $f$ for which the Luxembourg norm
$$\|f\|_{L^P}=\inf\left\{\lambda>0:\int_{\rr^n}P\left(\frac{|f(x)|}{\lambda}\right)\,dx\leq 1\right\}$$
is finite. In this paper we will be mainly interested in two particular families of Orlicz spaces.
Given $r,s\geq 0$, the first family corresponds to
$$P(t)=t\left(\log^+t\right)^{r}\,\left({\log^+ \log^+}t\right)^{s},$$
where $\log^+t:=\max\{1,\log t\}.$ The obtained $L^{P}$ spaces are known as \emph{Zygmund spaces},
and will be denoted from now on by $L\log^{r}L\,\log^{s}\log L $.
The second family is at the upper borderline. For $\gamma\geq 0$ we set
\begin{equation}\label{psi-function}
P(t)=\exp\left\{\frac{t}{(\log^+t)^\gamma}\right\}-1, \hspace{1cm} \,t\geq 0.
\end{equation}
Then we will denote the obtained $L^P$ by $\Exp(\frac{L}{\log^\gamma L} )$. If $\gamma=0$ or $\gamma=1$,
we then simply write $\Exp L$ and $\Exp(\frac{L}{\log L})$, respectively.
For each $\alpha>0$, throughout the paper, we denote by $\Phi_\alpha$ the Orlicz function
\begin{equation}\label{density-function}
\Phi_\alpha(t)=t\exp\left\{(\log^+t)^\alpha\right\}, \hspace{1cm} \,t\geq 0.
\end{equation}
\noindent
When changing the reference  measure from Lebesgue measure to the Gaussian measure, we will simply add $\mu$ to the notions of the spaces, as
$L\log L\log\log L(\mu)$, $\Exp_\mu\left(\frac{L}{\log L}\right)$, etc.\\
\\
The following lemma can be proved in the same way as \cite[Lemma
11]{cjmo}; see also \cite{rr91}.

\begin{lem}\label{duality}
(i) If $f\in L\log L\log\log L(\mu) $  and $g\in
\Exp_\mu(\frac{L}{\log L})$  then $fg\in L^1(\mu)$ and
$$\int_{\rn}|f(x)g(x)|\,d\mu\le 2\|f\|_{L\log L\log\log L(\mu)}\,\|g\|_{\Exp_\mu(\frac{L}{\log L})}.$$

(ii) If $f\in L\log L(\mu) $  and $g\in \Exp_\mu(L)$ then $fg\in
L^1(\mu)$ and
$$\int_{\rn}|f(x)g(x)|\,d\mu\le 2\|f\|_{L\log L(\mu)}\,\|g\|_{\Exp_\mu(L)}.$$
\end{lem}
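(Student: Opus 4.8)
The plan is to deduce both parts from the generalized Hölder (Young) inequality for Orlicz spaces, along the same lines as \cite[Lemma 11]{cjmo} (see also \cite{rr91}), with the Lebesgue measure replaced by $\mu$. The one elementary ingredient I would isolate first is: if $A,B:[0,\infty)\to[0,\infty)$ are increasing homeomorphisms with $A^{-1}(\tau)\,B^{-1}(\tau)\le\tau$ for all $\tau>0$, then $st\le A(s)+B(t)$ for all $s,t\ge0$. I would prove this in one line by setting $a=A(s)$, $b=B(t)$ and using monotonicity of the inverses: $st=A^{-1}(a)B^{-1}(b)\le A^{-1}(a+b)B^{-1}(a+b)\le a+b$. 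From here the standard scaling argument applies: apply the pointwise inequality with $s=|f(x)|/\|f\|_{L^A(\mu)}$ and $t=|g(x)|/\|g\|_{L^B(\mu)}$, integrate against $d\mu$, and invoke $\int_{\rn}A(|f|/\|f\|_{L^A(\mu)})\,d\mu\le1$ and $\int_{\rn}B(|g|/\|g\|_{L^B(\mu)})\,d\mu\le1$, which are immediate from the definition of the Luxembourg norm; this yields $\int_{\rn}|fg|\,d\mu\le2\,\|f\|_{L^A(\mu)}\|g\|_{L^B(\mu)}$.

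Next I would identify, for each pair, Young functions generating the spaces in the statement and verify the inverse-product condition via asymptotics. For (i) I would take $A(t)=t\log^+t\,\log^+\log^+t$ and $B(t)=\exp\{t/\log^+t\}-1$, so that $L^A(\mu)=L\log L\log\log L(\mu)$ and $L^B(\mu)=\Exp_\mu(\frac{L}{\log L})$; a short computation gives $A^{-1}(\tau)\sim\tau/(\log\tau\,\log\log\tau)$ and $B^{-1}(\tau)\sim\log\tau\,\log\log\tau$ as $\tau\to\infty$, whence $A^{-1}(\tau)B^{-1}(\tau)\sim\tau$. For (ii) I would take $A(t)=t\log^+t$ and $B(t)=e^t-1$, giving $L\log L(\mu)$ and $\Exp_\mu(L)$, with $A^{-1}(\tau)\sim\tau/\log\tau$ and $B^{-1}(\tau)=\log(1+\tau)$, and again product $\sim\tau$.

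The main obstacle is getting the constant exactly equal to $2$: the explicit functions above satisfy $A^{-1}(\tau)B^{-1}(\tau)\le\tau$ only up to a multiplicative constant, not literally, so the scaling argument as stated does not run verbatim. I would handle this as in \cite[Lemma 11]{cjmo}: either replace $A$ and $B$ by a genuinely complementary pair of convex Young functions, which are equivalent to the ones above with the equivalence oriented so that no constant is lost in the relevant norm comparison, or else verify the pointwise inequality $st\le A(s)+B(t)$ directly for all $s,t\ge0$ by a case analysis (near the origin, where the functions are essentially linear and exponential, and for large arguments, where the asymptotics above are sharp). Finally, I would note that the measure enters the argument only through $\sigma$-finiteness of the underlying space---and $\mu$ is in fact finite---so that the passage from $dx$ to $d\mu$ requires no modification.
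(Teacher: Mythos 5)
Your overall strategy (reduce to a pointwise Young-type inequality and run the scaling argument with Luxembourg norms) is the standard one and is essentially what the paper invokes by citing \cite[Lemma 11]{cjmo} and \cite{rr91}; the paper itself gives no proof. For part (ii) your plan does close completely, and with the stated constant: the exact inequality $st\le s\log^+ s+e^t-1$ holds for all $s,t\ge 0$ (maximize $st-e^t+1$ in $t$ and treat the ranges $s\le 1$, $1<s\le e$, $s>e$; for $1<s\le e$ it reduces to $s\log s+1\le 2s$, which is true), so the scaling argument yields the factor $2$.

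The genuine gap is in part (i), and it is exactly at the point you flag but do not resolve. Your first fallback, verifying $st\le A(s)+B(t)$ directly for $A(s)=s\log^+ s\,\log^+\log^+ s$ and $B(t)=\exp\{t/\log^+ t\}-1$, cannot succeed because this inequality is false for large $s$: take $t=\log s\,\log\log s+\tfrac12\log s\,\log\log\log s$, so that $st=A(s)+\tfrac12\,s\log s\,\log\log\log s$, while $\tfrac{t}{\log t}\le \log s-c\,\tfrac{\log s\,\log\log\log s}{\log\log s}$ and hence $B(t)=o(s)$; thus $st-B(t)$ exceeds $A(s)$ by an unbounded amount. Equivalently, the true complementary function $\tilde A$ of $A$ grows like $\exp\{t/(\log t-\log\log t)\}$, which is \emph{larger} than $B(t)$ for large $t$ and only satisfies $\tilde A(t)\le B(Ct)+K$ with some $C>1$. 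Consequently your second fallback does run, but the equivalence is oriented the wrong way for the constant: it gives $\int_{\rn}|fg|\,d\mu\le 2\|f\|_{L^A(\mu)}\|g\|_{L^{\tilde A}(\mu)}\le 2C'\|f\|_{L\log L\log\log L(\mu)}\|g\|_{\Exp_\mu(\frac{L}{\log L})}$ with some $C'>1$ coming from the norm comparison (finiteness of $\mu$ handles small values), not the literal constant $2$. So your argument proves (i) with a universal constant, which is all that the paper actually uses later, but as written it does not establish the statement with the constant $2$; to claim that exact constant you would need a sharper argument tailored to these normalizations (or the constant in the statement should be read as a harmless universal one).
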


\noindent
In this section we present a well-posedness result for the initial value problem for the transport equation in $L^\infty$. This is a new result, which
neither contains \cite[Theorem 1]{cjmo}, nor is contained in it. In order to state it, we write the transport equation in the Lebesgue case as
\begin{equation}\label{tran-eq-le}
\begin{cases}
\dfrac{\partial u}{\partial t}+b\cdot \nabla u=0 & (0,T)\times\,dx,\\
      u(0,\cdot)=u_0 &  \rn.
   \end{cases}\end{equation}
     and in the Gaussian case as
   \begin{equation}\label{tran-eq}
\begin{cases}
\dfrac{\partial u}{\partial t}+b\cdot \nabla u=0 & (0,T)\times\,d\mu,\\
      u(0,\cdot)=u_0 &  \rn.
   \end{cases}\end{equation}
   A function $u\in L^1(0,T;L^1_{\loc})$ is called a \textit{weak solution} to \eqref{tran-eq-le} if for each $\varphi\in C^\infty([0,T)\times\rr^n)$
   with compact support in  $[0,T)\times \rr^n$ it holds that
\begin{equation*}
-\int_0^T\int_{\rr^n} u\,\dfrac{\partial \varphi}{\partial t}\,dx\,dt-\int_{\rr^n} u_0 \,\varphi(0,\cdot)\,dx-\int_0^T\int_{\rr^n} u\,\div
(b\,\varphi)\,dx\,dt=0.
\end{equation*}
We also say that the problem \eqref{tran-eq-le} is \emph{well-posed} in $L^\infty(0,T;L^\infty)$ if weak solutions exist and are unique, for any
$u_0\in L^\infty$.\\
\\
Weak solutions of the transport equation \eqref{tran-eq} can be defined in a similar way. A simple observation is that a function $u\in
L^\infty(0,T;L^\infty)$ is a weak solution of \eqref{tran-eq-le} if and only if it is a weak solution of \eqref{tran-eq}. Indeed, if $u\in
L^\infty(0,T;L^\infty)$ is a weak solution of \eqref{tran-eq-le}, and $\varphi\in C^\infty_c([0,T)\times\rr^n)$ is a test function, then $ \frac{\varphi
(x)}{(2\pi)^{n/2}}\exp(-|x|^2/2)\in C^\infty_c([0,T)\times\rr^n)$, and so we can conclude that
\begin{equation*}
-\int_0^T\int_{\rr^n} u\,\dfrac{\partial \varphi}{\partial t}\,d\mu\,dt-\int_{\rr^n} u_0 \,\varphi(0,\cdot)\,d\mu-\int_0^T\int_{\rr^n}
u\left(\varphi\,\div_\mu b +b\cdot\nabla \varphi\right)\,d\mu(x)\,dt=0.
\end{equation*}
For the converse, we only need to use $ \frac{\varphi (x)}{(2\pi)^{n/2}}\exp(|x|^2/2)\in C^\infty_c([0,T)\times\rr^n)$ as a test function.\\

We now present our well posedness result for the transport equation
in the Gaussian setting.
\begin{thm}\label{tran-main}
Let $T>0$. Assume that  $b\in L^1(0,T;W^{1,1}_{\loc})$ satisfying \eqref{hyp-b-1} and \eqref{hyp-b-2}.
Then for each $u_0\in L^\infty$ there exists a unique weak solution
$u\in L^\infty(0,T;L^\infty)$ of the Cauchy problem for the transport equation \eqref{tran-eq}.
\end{thm}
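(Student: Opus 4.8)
The plan is to follow the DiPerna--Lions scheme adapted to the Gaussian setting, splitting the argument into \emph{existence} (by regularization and weak compactness) and \emph{uniqueness} (by the renormalization property plus a Gronwall-type estimate). First I would mollify $b$ by convolution, $b_\varepsilon = b * \rho_\varepsilon$, so that $b_\varepsilon$ is smooth and, crucially, the growth bound \eqref{hyp-b-1} and the divergence bound \eqref{hyp-b-2} pass to $b_\varepsilon$ with uniform constants; the fact that $\div_\mu$ commutes with the Ornstein--Uhlenbeck smoothing semigroup (noted in the introduction) is what makes the Gaussian bound \eqref{hyp-b-2} stable under regularization, so one may prefer the OU-mollification to plain convolution here. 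For the regularized field $b_\varepsilon$ the classical method of characteristics gives a smooth flow $X_\varepsilon$ and hence a smooth solution $u_\varepsilon(t,x) = u_0(X_\varepsilon(t,\cdot)^{-1}(x))$ of \eqref{tran-eq}, with $\|u_\varepsilon(t,\cdot)\|_{L^\infty} \le \|u_0\|_{L^\infty}$ for all $\varepsilon$. Since $L^\infty(0,T;L^\infty)$ is the dual of $L^1(0,T;L^1)$, extract a weak-$*$ limit $u_\varepsilon \rightharpoonup u$ along a subsequence; passing to the limit in the (linear) weak formulation is routine provided $b_\varepsilon \to b$ strongly in $L^1(0,T;L^1_{\loc})$ and $\div_\mu b_\varepsilon \to \div_\mu b$ in $L^1(0,T;L^1_{\loc})$, which standard mollifier theory provides. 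This yields existence of a weak solution $u \in L^\infty(0,T;L^\infty)$.

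The heart of the matter is \emph{uniqueness}, for which it suffices (by linearity) to show that $u_0 = 0$ forces $u \equiv 0$. The key is the \emph{renormalization property}: if $u \in L^\infty(0,T;L^\infty)$ is a weak solution of \eqref{tran-eq}, then for every $\beta \in C^1(\rr) \cap W^{1,\infty}(\rr)$ the composition $\beta(u)$ is again a weak solution of the same equation, i.e.
\begin{equation}\label{renorm-eq}
\frac{\partial}{\partial t}\beta(u) + b\cdot\nabla\beta(u) = 0
\end{equation}
in the weak sense against $d\mu\,dt$. This is proved, as in DiPerna--Lions, via the commutator estimate: setting $u_\varepsilon = u * \rho_\varepsilon$ one has $\partial_t u_\varepsilon + b\cdot\nabla u_\varepsilon = r_\varepsilon$ where $r_\varepsilon = b\cdot\nabla(u*\rho_\varepsilon) - (b\cdot\nabla u)*\rho_\varepsilon \to 0$ in $L^1_{\loc}$; the commutator lemma requires only $b \in W^{1,1}_{\loc}$ in the spatial variable and $u \in L^\infty$, so it applies verbatim here. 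Given \eqref{renorm-eq}, choose $\beta$ bounded with $\beta(0)=0$ and $\beta \ge 0$ (e.g.\ $\beta(r) = r^2/(1+r^2)$ truncated, or approximate $r\mapsto r^2$), test \eqref{renorm-eq} against $\varphi(t,x) = \chi(t)\,\eta_R(x)$ with $\eta_R$ a smooth Gaussian-adapted cutoff, and integrate. The growth hypothesis \eqref{hyp-b-1} controls the flux through the boundary of the cutoff region when $R\to\infty$, and the divergence bound \eqref{hyp-b-2} together with Lemma \ref{duality}(i)---pairing $\beta(u) \in L^\infty \subset L\log L\log\log L(\mu)$ against $\div_\mu b \in \Exp_\mu(L/\log L)$---makes the term $\int \beta(u)\,\div_\mu b\,d\mu$ integrable in space-time. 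One then arrives at an inequality of the form
\begin{equation*}
\int_{\rn}\beta(u(t,x))\,d\mu(x) \le \int_s^t \|\div_\mu b(r,\cdot)\|_{\Exp_\mu(L/\log L)}\, g(r)\,dr
\end{equation*}
with $g \in L^\infty(0,T)$, whence Gronwall's inequality (in the integral, not differential, form, since $\div_\mu b \in L^1$ in time) forces $\int_{\rn}\beta(u(t,\cdot))\,d\mu = 0$ for a.e.\ $t$, i.e.\ $u(t,\cdot) = 0$ $\mu$-a.e., and since $\mu$ and Lebesgue measure are mutually absolutely continuous, $u = 0$ a.e.

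The step I expect to be the main obstacle is the rigorous passage to the limit in the cutoff parameter $R$ in the renormalized/weak formulation while simultaneously handling the unbounded divergence. In the classical bounded-divergence case one simply bounds $\int \beta(u)\,|\div b|\,\eta_R\,dx \le \|\div b\|_\infty \|\beta\|_\infty$; here one must instead invoke the Orlicz duality of Lemma \ref{duality}, and one must be careful that the cutoff functions $\eta_R$ and the error terms $|b\cdot\nabla\eta_R|$ remain controlled uniformly---this is exactly where \eqref{hyp-b-1} with the $|x|\log^+|x|$ weight is used, matching the Gaussian weight in $d\mu$ so that $\int |b|\,|\nabla\eta_R|\,d\mu \to 0$. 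A secondary technical point is ensuring the renormalization lemma's commutator argument interacts correctly with the Gaussian weight; this is resolved by the observation already made in the excerpt that $u \in L^\infty(0,T;L^\infty)$ is a weak solution of the Lebesgue equation \eqref{tran-eq-le} if and only if it is one of the Gaussian equation \eqref{tran-eq}, so the commutator estimate may be carried out with respect to Lebesgue measure (where classical mollifier theory applies directly) and the conclusion transferred back. Apart from these points, the argument is a fairly direct adaptation of \cite[Theorem 1]{cjmo} with the Lebesgue divergence condition replaced by its Gaussian counterpart.
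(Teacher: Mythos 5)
Your existence argument matches the paper's (the paper cites \cite[Proposition 2.1]{dl89}, which is precisely the regularization/compactness step you outline), and your identification of the renormalization property as the engine of uniqueness, together with the cutoff argument controlled by \eqref{hyp-b-1} and the duality pairing from Lemma~\ref{duality}, is the right skeleton. But the final step of your uniqueness argument is not correct as written, and this is in fact the crux of the whole theorem.

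After renormalizing and pairing via Orlicz duality, the estimate you obtain on the divergence term is controlled by $\|\beta(u(r,\cdot))\|_{L\log L\log\log L(\mu)}$, \emph{not} by $\|\beta(u(r,\cdot))\|_{L^1(\mu)}$. If you crudely bound the $L\log L\log\log L$ norm by an $L^\infty$-dependent constant (which is what your display with $g\in L^\infty(0,T)$ does), the resulting inequality
\[
\int_{\rn}\beta(u(t,\cdot))\,d\mu \le \int_0^t\|\div_\mu b(r,\cdot)\|_{\Exp_\mu(L/\log L)}\,g(r)\,dr
\]
has a right-hand side that does not vanish when $u_0=0$ and hence does not force $u\equiv 0$; and a linear Gronwall inequality in $\int\beta(u)\,d\mu$ cannot be closed because the Orlicz norm on the right is genuinely stronger than $L^1$. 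What one actually needs is a quantitative interpolation-type bound that expresses $\|\beta(u)\|_{L\log L\log\log L(\mu)}$ in terms of $\|\beta(u)\|_{L^1(\mu)}$ and $\|\beta(u)\|_{L^\infty}$, yielding a differential inequality of Osgood type roughly of the form $y'\le C\,\beta(t)\,y\,\log(1/y)\,\log\log(1/y)$, whose modulus still satisfies the Osgood condition (the antiderivative of $1/(y\log(1/y)\log\log(1/y))$ diverges at $0$). Integrating this gives the triple-logarithm stability estimate of Theorem~\ref{quant}, and uniqueness follows by letting the initial $L^p(\mu)$-norm tend to zero. This Osgood mechanism, not a plain Gronwall, is the missing idea; it is exactly what the paper delegates to Theorem~\ref{quant} (whose proof in turn follows \cite[Theorem 5]{cjmo}), and the paper's proof of Theorem~\ref{tran-main} consists of nothing more than these two citations.
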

\begin{proof}
Existence of solution follows immediately from \cite[Proposition
2.1]{dl89}, while uniqueness will follow from the following
stability estimate, Theorem \ref{quant}.
\end{proof}

The proof of the following two theorems is similar to \cite[Theorem 5]{cjmo}, so the proof will be omitted.

\begin{thm}\label{quant}
Let $T,M>0$ and $1\le p<\infty$. Suppose that $b\in L^1(0,T;W^{1,1}_{\loc})$ satisfies \eqref{hyp-b-1} and
\eqref{hyp-b-2}. Let $\epsilon\in (0,\frac12\exp(-e^{e+M}))$ satisfying
$$\exp\left\{-\exp\left\{\exp\left\{\log\log\log\frac 1{\epsilon}-32e\int_0^T\beta(s)\,ds\right\}\right\}\right\}<\frac12\exp(-e^{e+M}),$$
where $\beta(t)=\|\mathrm{div}_\mu b(t,\cdot)\|_{\Exp_\mu(\frac{L}{\log L})}$.
Then for each $u_0\in L^\infty(\mu)$ with
$\|u_0\|_{L^\infty(\mu)}\le M$ and $\|u_0\|^p_{L^p(\mu)}< \ez$,
the transport problem \eqref{tran-eq} has a unique solution $u\in L^\infty(0,T;L^\infty)$, moreover
it holds that
$$
\left|\log\log\log\left( \frac{1}{\|u(T,\cdot)\|^p_{L^p(\mu)}}\right)-\log\log\log\left(\frac{1}{\|u_0\|^p_{L^p(\mu)}}\right)\right|\leq
16e\int_0^T\beta(s)\,ds.
$$
\end{thm}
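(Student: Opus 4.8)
The plan is to follow the scheme of \cite[Theorem~5]{cjmo}: use the DiPerna--Lions renormalization property to turn the transport equation into an explicit evolution law for the $L^p(\mu)$-mass $\eta(t):=\|u(t,\cdot)\|_{L^p(\mu)}^p$ of a bounded weak solution, control the resulting differential inequality at its critical logarithmic growth, and integrate it after the substitution $Z=\log\log\log\tfrac{1}{\eta}$. Existence of a solution is already supplied by \cite[Proposition~2.1]{dl89}, and the estimate below in fact holds for \emph{every} weak solution $u\in L^\infty(0,T;L^\infty)$ of \eqref{tran-eq}; uniqueness will then follow at the end. To set it up: under \eqref{hyp-b-1}--\eqref{hyp-b-2} the DiPerna--Lions renormalization theorem (established in the Lebesgue picture and carried to the Gaussian one via the equivalence of weak solutions of \eqref{tran-eq-le} and \eqref{tran-eq} recalled above; cf. \cite{dl89,cjmo}) gives $\partial_t(|u|^p)+b\cdot\nabla(|u|^p)=0$ in the weak sense, after a standard smoothing of $r\mapsto|r|^p$ near the origin. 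Testing this against functions constant in $x$ --- legitimate because \eqref{hyp-b-1} makes $|b(t,x)|\ls 1+|x|\log^+|x|$ negligible against the Gaussian weight at infinity, so the boundary terms vanish --- shows that $\eta$ is absolutely continuous, with
\[\eta'(t)=-\int_{\rn}b\cdot\nabla\bigl(|u|^p\bigr)\,d\mu=\int_{\rn}|u(t,x)|^p\,\div_\mu b(t,x)\,d\mu(x).\]

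By the duality Lemma \ref{duality}(i) this yields $|\eta'(t)|\le 2\beta(t)\,\bigl\|\,|u(t,\cdot)|^p\,\bigr\|_{L\log L\log\log L(\mu)}$. The heart of the matter is the following elementary Orlicz estimate: if $0\le h\le N$ and $\eta:=\int_{\rn}h\,d\mu$ is small relative to $N$, then $\|h\|_{L\log L\log\log L(\mu)}\le C_0\,\eta\,\log^+\tfrac{1}{\eta}\,\log^+\log^+\tfrac{1}{\eta}$ with an absolute constant $C_0$; one proves it by using $\lambda=C_0\,\eta\log^+\tfrac{1}{\eta}\log^+\log^+\tfrac{1}{\eta}$ as a competitor in the Luxembourg norm, bounding $P(h/\lambda)\le (h/\lambda)(N/\lambda)^{-1}P(N/\lambda)$ and exploiting $N/\lambda\le N/\eta$ with $\eta$ small. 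Applying this with $h=|u(t,\cdot)|^p$ and $N=M^p$ --- the range $\eta<\tfrac12 e^{-e^{e+M}}$ in the statement being a deliberately conservative set of $\eta$ on which all the logarithms above dominate the $M^p$-contributions, so that $C_0$ is absolute --- we obtain, in that range,
\[|\eta'(t)|\le 16e\,\beta(t)\,\eta(t)\,\log^+\tfrac{1}{\eta(t)}\,\log^+\log^+\tfrac{1}{\eta(t)}.\]

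Put $Z(t):=\log\log\log\tfrac{1}{\eta(t)}$, which is defined and positive wherever $\eta(t)<e^{-e}$. By the chain rule the last inequality becomes $|Z'(t)|\le 16e\,\beta(t)$, valid as long as $\eta(t)$ stays in the admissible range. A continuity/bootstrap argument extends this to all of $[0,T]$: were $t^\ast$ the first time $\eta$ reached $N^\ast:=\tfrac12 e^{-e^{e+M}}$, integration of $|Z'|\le 16e\,\beta$ over $[0,t^\ast]$ would give $Z(t^\ast)\ge Z(0)-16e\int_0^T\beta(s)\,ds>\log\log\log\tfrac{1}{\ez}-16e\int_0^T\beta(s)\,ds$, which by the quantitative smallness hypothesis on $\ez$ (the displayed inequality of the statement, the surplus factor $32e$ supplying the strict margin) exceeds $\log\log\log\tfrac{1}{N^\ast}=Z(t^\ast)$ --- a contradiction; simultaneously, since $\omega(r):=r\log^+\tfrac{1}{r}\log^+\log^+\tfrac{1}{r}$ satisfies $\int_{0^+}\tfrac{dr}{\omega(r)}=\infty$, Osgood's uniqueness criterion applied to the comparison equation $\dot r=16e\,\beta(t)\,\omega(r)$ prevents $\eta$ from touching $0$. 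Hence $|Z'|\le 16e\,\beta$ on $[0,T]$, and integrating gives $|Z(T)-Z(0)|\le 16e\int_0^T\beta(s)\,ds$, the asserted bound. Finally, running the steps above with $p=2$ on the difference $w=u_1-u_2$ of two bounded weak solutions with the same datum --- now $\eta(0)=\|w(0,\cdot)\|_{L^2(\mu)}^2=0$ --- the very same Osgood argument forces $w\equiv0$, which combined with existence from \cite[Proposition~2.1]{dl89} yields the uniqueness in the statement.

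The crux, and the step I expect to be the real obstacle, is the matching of scales that makes the iteration self-contained: the Orlicz control of $\|\,|u|^p\,\|$ must have exactly the strength of $L\log L\log\log L(\mu)$ --- the Orlicz space put in duality with $\Exp_\mu(\tfrac{L}{\log L})$ by Lemma \ref{duality}(i) --- so that after the pairing the differential inequality sits precisely at the growth $\eta\log\tfrac{1}{\eta}\log\log\tfrac{1}{\eta}$, whose primitive under $Z=\log\log\log\tfrac{1}{\eta}$ is \emph{linear in $t$}. A cruder bound on the density would collapse the conclusion to a useless Gronwall estimate $\|u(T)\|_{L^p(\mu)}\le e^{C\int_0^T\beta(s)\,ds}\|u_0\|_{L^p(\mu)}$, which is worthless here because $\div_\mu b$ is only subexponentially integrable. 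The rest is bookkeeping --- propagating the $M$-dependent smallness threshold through the bootstrap so that the a priori hypothesis on $\ez$ is precisely what the argument consumes.
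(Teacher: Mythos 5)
Your overall scheme is exactly the one the paper has in mind (it omits the proof and defers to \cite[Theorem 5]{cjmo}): renormalize to get an equation for $|u|^p$, test against cutoffs to obtain $\eta'(t)=\int_{\rn}|u|^p\,\div_\mu b\,d\mu$, pair via Lemma \ref{duality}(i), convert to a differential inequality at the growth $\eta\log^+\tfrac1\eta\log^+\log^+\tfrac1\eta$, substitute $Z=\log\log\log\tfrac1\eta$, and close with a bootstrap using the quantitative smallness of $\ez$ (the $32e$ margin) plus an Osgood argument for uniqueness. All of that matches the intended proof.

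The genuine gap is in the step you yourself call the crux: the Orlicz lemma with an \emph{absolute} constant. From $0\le h\le N$ and $\int_{\rn}h\,d\mu=\eta$ the Luxembourg-norm competitor argument only gives $\|h\|_{L\log L\log\log L(\mu)}\le \eta\,\log^+\bigl(\tfrac N\eta\bigr)\log^+\log^+\bigl(\tfrac N\eta\bigr)$, and this is sharp (take $h=N\chi_E$ with $\mu(E)=\eta/N$, which is realized by $u=M\chi_E$, $N=M^p$). Replacing $\tfrac N\eta$ by $\tfrac1\eta$ with a constant that, after the duality factor $2$, stays below $16e$ requires $\log N=p\log M\ls\log\tfrac1\eta$; but your smallness range $\eta<\tfrac12\exp(-e^{e+M})$ depends on $M$ only, so for $M>1$ and large $p$ the claim ``so that $C_0$ is absolute'' is not justified (and is false as stated): the differential inequality you actually get is $|\eta'|\le 2\beta(t)\,\eta\,\log^+\tfrac{M^p}{\eta}\log^+\log^+\tfrac{M^p}{\eta}$, whose clean integrated form involves $\log\log\log\tfrac{M^p}{\eta}$, not $\log\log\log\tfrac1\eta$, and the conversion does not follow from the stated hypotheses uniformly in $p$. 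To make your argument complete you must either normalize $u$ (note that for $M\le1$ your lemma holds with $C_0=1$, and this covers the only situation in which the theorem is actually invoked later in the paper: indicator data, $M=1$, $p=1$), or carry the $M^p$ explicitly through the triple-log substitution and show the bootstrap still closes under the stated threshold — together with an actual verification of the numerology that produces precisely the constant $16e$ (duality factor, $\log^+$ versus $\log$, and the $M$-absorption permitted by $\log\tfrac1\eta\ge e^{e+M}$), which at present you assert rather than check. The remaining ingredients (vanishing of boundary terms thanks to the Gaussian weight and \eqref{hyp-b-1}, absolute continuity of $\eta$, the first-touching-time argument, and Osgood applied to $w=u_1-u_2$ for uniqueness) are fine as sketched.
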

\begin{thm}\label{quant-exp}
Let $T,M>0$ and $1\le p<\infty$. Suppose that $b\in L^1(0,T;W^{1,1}_{\loc})$ satisfies
$$\frac{|b(t,x)|}{1+|x|\log^+(|x|)}\in L^1(0,T;L^\infty)+L^1(0,T;L^1)$$
 and
\begin{equation}\label{hyp-b-exp}
\mathrm{div}_\mu b\in L^1\left(0,T;\Exp_\mu\left(L\right)\right).
\end{equation}
Let $\epsilon\in (0,1/e)$ such that
$$\exp\left\{-\exp\left\{\log\log\frac 1{\epsilon}-8\int_0^T\beta(s)\,ds\right\}\right\}<\frac1{2({e+M})},$$
where $\beta(t)=\|\mathrm{div} b(t,\cdot)\|_{\Exp_\mu(L)}$.
Then for each $u_0\in L^\infty(\mu)$ with
$\|u_0\|_{L^\infty(\mu)}\le M$ and $\|u_0\|^p_{L^p(\mu)}< \ez$,
the transport problem \eqref{tran-eq} has a unique solution $u\in L^\infty(0,T;L^\infty)$, moreover
it holds that
$$
\left|\log\log\left( \frac{1}{\|u(T,\cdot)\|^p_{L^p(\mu)}}\right)-
\log\log\left(\frac{1}{\|u_0\|^p_{L^p(\mu)}}\right)\right|\leq 4\int_0^T\beta(s)\,ds.
$$
\end{thm}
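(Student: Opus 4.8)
The plan is to follow the scheme of \cite[Theorem 5]{cjmo}, adapting it to the present exponential-integrability hypothesis \eqref{hyp-b-exp} in place of the stronger $\Exp_\mu(L/\log L)$ one. Existence and uniqueness of the $L^\infty$ solution $u$ is obtained exactly as in Theorem \ref{tran-main}: existence from \cite[Proposition 2.1]{dl89}, and uniqueness will be a consequence of the quantitative bound itself. So the substance is the displayed inequality. First I would reduce to smooth $b$ by regularization (mollify $b$ in $x$; the renormalization property of DiPerna--Lions holds under these growth and divergence assumptions, so the regularized solutions $u_\varepsilon$ converge to $u$ and it suffices to prove the estimate with uniform constants for smooth $b$). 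With $b$ smooth, $u(t,\cdot)$ is transported along the flow of $b$, and the function $w:=|u|^p$ is itself a (renormalized) solution of the transport equation with the same $b$, so $w$ satisfies, in the weak/distributional sense against $d\mu$,
\[
\frac{d}{dt}\int_{\rn} w(t,x)\,d\mu(x)=\int_{\rn} w(t,x)\,\div_\mu b(t,x)\,d\mu(x).
\]

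The heart of the argument is to bound the right-hand side by something that closes a differential inequality for the quantity $\|u(t,\cdot)\|_{L^p(\mu)}^p=\int w\,d\mu$. Here is where the Orlicz duality enters. Since $u\in L^\infty(0,T;L^\infty)$ with $\|u_0\|_{L^\infty(\mu)}\le M$, and since the $L^\infty$ norm is transported up to the factor $\exp\int\|\div_\mu b\|_{L^\infty}\dots$ — more carefully, one controls $\|w(t,\cdot)\|_{L^\infty}$ a priori on $[0,T]$ by a constant depending on $M$ and $\int\beta$ — one writes $w=w^{1/p'}\cdot w^{1/p}$ type splittings so that $w$ lies in $L\log L(\mu)$ with a norm controlled by $\int w\,d\mu$ times a logarithmic factor of the form $\log\big(e+\|w\|_{L^\infty}/\int w\,d\mu\big)$. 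Pairing this with $\div_\mu b\in\Exp_\mu(L)$ via Lemma \ref{duality}(ii) gives
\[
\left|\frac{d}{dt}\int_{\rn} w\,d\mu\right|\le 2\,\beta(t)\,\Big(\textstyle\int_{\rn} w\,d\mu\Big)\,\log\!\Big(e+\frac{C(M,\int\beta)}{\int_{\rn} w\,d\mu}\Big)
\]
for a suitable constant. Setting $y(t)=\int w(t,\cdot)\,d\mu=\|u(t,\cdot)\|_{L^p(\mu)}^p$, this is a differential inequality $|y'|\le 2\beta(t)\,y\log(e+C/y)$. Integrating it — the integrating substitution is essentially $\phi(y)=\log\log(C/y)$, which is why the double logarithm and the factor $4$ (rather than the triple logarithm and $16e$ of Theorem \ref{quant}) appear — yields precisely
\[
\Big|\log\log\frac{1}{y(T)}-\log\log\frac{1}{y(0)}\Big|\le 4\int_0^T\beta(s)\,ds,
\]
after tracking constants; the smallness hypothesis on $\epsilon=\|u_0\|^p_{L^p(\mu)}$ (namely $\epsilon<1/e$ and the displayed condition) is exactly what keeps $y(t)$ in the regime where $\log\log(1/y)$ is defined and the estimates above are valid throughout $[0,T]$.

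The main obstacle, and the step requiring genuine care, is the Orlicz-norm bookkeeping that produces the sharp constant: one must verify that $w$ belongs to $L\log L(\mu)$ with the claimed norm bound involving $\int w\,d\mu$, which requires a quantitative version of the inclusion $L^\infty\cap L^1\hookrightarrow L\log L$ with the right dependence on the ratio of norms, and then feed this into the Gronwall-type argument with the correct numerical factors so that $8\int_0^T\beta$ in the smallness condition and $4\int_0^T\beta$ in the conclusion come out. A secondary technical point is justifying the a priori $L^\infty$ bound on $w(t,\cdot)$ uniformly on $[0,T]$: for smooth $b$ this follows from $|u(t,X(0,t,x))|=|u_0(x)|$ together with the quasi-invariance estimate for the flow (whose density is $\exp\int\div_\mu b$, integrable against $d\mu$ by \eqref{hyp-b-exp}), but one must be sure the resulting bound survives the passage to the non-smooth limit. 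Since all of this is parallel to \cite[Theorem 5]{cjmo} with $\log L$ exponents lowered by one throughout, I expect no conceptual difficulty beyond careful constant-chasing, which is why the proof is omitted in the text.
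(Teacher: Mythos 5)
The paper does not actually supply a proof of Theorem~\ref{quant-exp}: it states only ``The proof of the following two theorems is similar to \cite[Theorem 5]{cjmo}, so the proof will be omitted,'' so there is no in-text argument to compare against line by line. That said, your reconstruction is precisely the scheme that the cited reference uses, adapted one logarithmic rung down, and it appears sound. The chain you lay out---renormalize so that $w=|u|^p$ solves the same transport equation; test against $1$ in the Gaussian measure to get $\frac{d}{dt}\int w\,d\mu=\int w\,\div_\mu b\,d\mu$; feed $w\in L^\infty\cap L^1(\mu)$ into the quantitative inclusion $L^\infty\cap L^1\hookrightarrow L\log L$ with the ratio-dependent norm $\|w\|_{L\log L(\mu)}\lesssim\big(\int w\,d\mu\big)\log\big(e+\|w\|_{L^\infty}/\int w\,d\mu\big)$; pair with $\div_\mu b\in\Exp_\mu(L)$ through Lemma~\ref{duality}(ii); and integrate the resulting ODE $|y'|\le 2\beta(t)\,y\log(e+C/y)$ via the substitution $\phi(y)=\log\log(1/y)$---is exactly what the triple-log argument of Theorem~\ref{quant} (and of \cite[Theorem 5]{cjmo}) becomes when the divergence assumption is strengthened from $\Exp_\mu(L/\log L)$ to $\Exp_\mu(L)$, and you have correctly located both where the constant $4$ comes from (a factor $2$ from duality times a factor $2$ from bounding $\log(e+C/y)/\log(1/y)$) and the role of the smallness hypothesis on $\epsilon$ (it keeps $y(t)$ in the range where that ratio bound holds on all of $[0,T]$). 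You also correctly flag the two genuine technical burdens---the sharp quantitative $L\log L$ embedding and the a priori control of $\|w(t,\cdot)\|_{L^\infty(\mu)}$ surviving the mollification limit---which is where the actual work would sit if the proof were written out. In short: this matches the intended approach, and I see no gap beyond the constant-chasing you already acknowledge.
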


\section{A priori estimates of the Jacobian}
\hskip\parindent In this section, we give a priori estimates of the density functions when
we assume that the vector field is smooth. Recall that ${\Phi_\alpha}(s)=s\exp\{[\log^+(s)]^\alpha\}$
 is given in \eqref{density-function}.

\begin{proof}[Proof of Theorem \ref{subexp-cr}]
The existence and uniqueness of the flow is an immediate consequence
of the assumption that $b(t,\cdot)\in C^2(\rn)$ for each $t\in
[0,T]$ and satisfies
$$\frac{|b(t,x)|}{1+|x|\log^+|x|}\in L^1(0,T;L^\infty).$$
Moreover, the { forward flow associated to $b,$ } $X(s,t,x)$,  is locally Lipschitz for each $0\le s\le t\le T$. See Hale \cite{ha80} for instance.

Let us estimate the density function. Obviously, it holds that
$$\int_\rn K_{s,t}(x)\,d\mu(x)=\int_\rn\,d\mu(x)=1,$$
i.e., $\|K_{s,t}\|_{L^1(\mu)}=1$ for each $t\in [s,T]$. As a consequence,
$$\mu(\{x:\,K_{s,t}(x)>\lambda\})\le \frac{1}{\lambda}$$
for all $\lambda>0$ and $t\in [s,T]$.

 Let $k_0\in\cn$ be large enough such that
$$\exp\left\{-\exp\left\{\exp\left\{\log\log\log 2^{k_0}-32e\int_0^{T}\beta(r)\,dr\right\}\right\}\right\}
<\frac12\exp(-e^{2e}),$$
where $\beta(r)=\|\mathrm{div}_\mu b(r,\cdot)\|_{\Exp_\mu(\frac{L}{\log L})}$. Obviously, $k_0$ only depends on
$\|\mathrm{div}_\mu b(r,\cdot)\|_{\Exp_\mu(\frac{L}{\log L})}$.

Fix  $0\le s_0\le t_0\le T$. For each $k>k_0$, let
$$E_k=\{x\in\rn:\,2^{k-1}<K_{s_0,t_0}(x)\le 2^k\},$$
and $u_k(x)=\chi_{E_k}(x)$, where $\chi_E$ denotes the characteristic function of the set $E$. Then $u_k\in L^1(\mu)\cap L^\infty(\mu)$ with $\|u_k\|_{L^\infty(\mu)}\le 1$
and $\|u_k\|_{L^1(\mu)}\le 2^{1-k}$.

{\bf Claim:}  $u(s,x):=u_k(X(s,t_0,x))$ is the unique solution in the
Gaussian setting to the backward equation
\begin{equation*}
\begin{cases}
\dfrac{\partial u}{\partial {s}}+b\cdot \nabla u=0 & (0,t_0)\times\,\,d\mu,\\
      u(t_0,\cdot)=u_k &  \rn.
   \end{cases}\end{equation*}
   {\it Proof of the Claim:}  Let   $\varphi\in C^\infty_c((0,t_0]\times\rr^n)$  be a test function.
   Since $b(t,\cdot)\in C^2(\rn)$ for each $t\in [0,T]$, we know that  the density function $K_{s,t}$ satisfies
   \begin{equation}\label{density-funct-smooth}
   K_{s,t}(x)=\exp\left\{\int_s^t -\mathrm{div}_\mu\,b(r,\tilde X(r,t,x))\,dr\right\},
\end{equation}
where
$$\tilde X(s,t,x)=x-\int_s^t b(r,\tilde  X(r,t,x))\,dr$$
is the inverse map of $X(s,t,x)$; see \cite[Theorem 2.1]{cc05} or
\cite{af09}.
By using  change of
variables and integration by parts, we obtain that
\begin{eqnarray*}
&&\int_0^{t_0}\int_{\rr^n} u\,\dfrac{\partial \varphi}{\partial s}\,d\mu\,ds=\int_0^{t_0}\int_{\rr^n} u_k(X(s,t_0,x))\,\dfrac{\partial \varphi(s,x)}{\partial
s}\,d\mu\,ds\\
&&\hs=\int_0^{t_0}\int_{\rr^n} u_k(y)\,\dfrac{\partial \varphi(s,z)}{\partial
s}\bigg|_{z=\tilde X(s,t_0,y)}K_{s,t_0}(y)\,d\mu\,ds\\
&&\hs=\int_0^{t_0}\int_{\rr^n} u_k(y)\left[\,\dfrac{\partial \varphi(s,\tilde X(s,t_0,y))}{\partial s}-\nabla \varphi(s,\tilde X(s,t_0,y))\cdot
b(s,\tilde X(s,t_0,y))\right]K_{s,t_0}(y)\,d\mu\,ds\\
&&\hs=\int_{\rr^n} u_k \,\varphi(t_0,\cdot)\,d\mu-\int_0^{t_0}\int_{\rr^n} u_k(y) \varphi(s,\tilde X(s,t_0,y))K_{s,t_0}(y)\mathrm{div}_\mu\,b(s,\tilde X(s,t_0,y))\,\,d\mu\,dt\\
&&\hs\hs   -\int_0^{t_0}\int_{\rr^n} u_k(y)\nabla \varphi(s,\tilde X(s,t_0,y))\cdot b(s,\tilde X(s,t_0,y))K_{s,t_0}(y)\,d\mu\,ds\\
&&\hs=\int_{\rr^n} u_k
\,\varphi(t_0,\cdot)\,d\mu-\int_0^{t_0}\int_{\rr^n} u_k(X(s,t_0,x))
\left[\varphi(s,x)\mathrm{div}_\mu\,b(s,x)+\nabla \varphi(s,x)\cdot
b(s,x)\right]\,\,d\mu\,ds,
\end{eqnarray*}
which verifies the Claim.  Above, { in the third equality, we have used that  $\partial \tilde X(s,t,x)/\partial s=
b(s,\tilde X(s,t,x))$ and
\begin{eqnarray*}
&&\,\dfrac{\partial \varphi(s,\tilde X(s,t_0,y))}{\partial s}=\,\dfrac{\partial \varphi(s,z)}{\partial
s}\bigg|_{z=\tilde X(s,t_0,y)}+\nabla \varphi(s,\tilde X(s,t_0,y))\cdot
b(s,\tilde X(s,t_0,y)).
\end{eqnarray*}}


By Theorem \ref{quant} and the choose of $k_0$, we find that for each $s\in [0,t_0]$ it holds
$$
\left|\log\log\log\left( \frac{1}{\|u(s,\cdot)\|_{L^1(\mu)}}\right)
-\log\log\log\left(\frac{1}{\|u_k\|_{L^1(\mu)}}\right)\right|\leq 16e\int_s^{t_0}\beta(r)\,dr,
$$
which implies that
$$
\exp\left\{-16e\int_{s}^{t_0}\beta(r)\,dr\right\}\le \dfrac{\log\log\left(
\frac{1}{\|u(s,\cdot)\|_{L^1(\mu)}}\right)}{\log\log\left(\frac{1}{\|u_k\|_{L^1(\mu)}}\right)}
\leq \exp\left\{16e\int_s^{t_0}\beta(r)\,dr\right\}.
$$
Hence, we can conclude that
$$
\left(\log\frac{1}{\|u_k\|_{L^1(\mu)}}\right)^{\exp\left\{-16e\int_{s}^{t_0}\beta(r)\,dr\right\}}\le
\log\frac{1}{\|u(s,\cdot)\|_{L^1(\mu)}}
\leq \left(\log\frac{1}{\|u_k\|_{L^1(\mu)}}\right)^{\exp\left\{16e\int_{s}^{t_0}\beta(r)\,dr\right\}}.
$$
The choose of $u$ implies
that
$$\|u(s_0,\cdot)\|_{L^1(\mu)}=\int_{E_k}K_{s_0,t_0}(x)\,d\mu(x),$$
and hence,
$$
\left(\log\frac{1}{\mu(E_k)}\right)^{\exp\left\{-16e\int_{s_0}^{t_0}\beta(r)\,dr\right\}}\le
\log\frac{1}{2^{k-1}\mu(E_k)}=\log\frac{1}{2^{k-1}}+\log\frac{1}{\mu(E_k)}.
$$
A direct calculation gives
$$
\log\frac{1}{\mu(E_k)}\ge \log {2^{k-1}}+
\left[\log {2^{k-1}}\right]^{\exp\left\{-16e\int_{s_0}^{t_0}\beta(r)\,dr\right\}}
$$
Therefore, we can conclude that,
$$\mu(E_k)\le \exp\left\{-\log {2^{k-1}}-
\left[\log {2^{k-1}}\right]^{\exp\left\{-16e\int_{{s_0}}^{t_0}\beta(s)\,ds\right\}}\right\}
\le \frac 1{2^{k-1}} \exp\left\{-\left(\log 2^{k-1}\right)^{\exp\left\{-16e\int_{s_0}^{t_0}\beta(r)\,dr\right\}}\right\}.$$

For an arbitrary $\alpha\in
(0,\exp\left\{-16e\int_{s_0}^{t_0}\beta(r)\,dr\right\})$, we have that
\begin{eqnarray*}
&&\int_{\rn} K_{{s_0,t_0}}(x)\exp\{[\log^+ K_{{s_0,t_0}}(x) ]^\alpha\}\,d\mu(x)\\
&&
\le\int_{\rn} 2^{k_0}\exp\{[\log^+ 2^{k_0}]^\alpha\}\,d\mu(x)+\sum_{k>k_0}\int_{E_k} 2^k\exp\{[\log^+ (2^k) ]^\alpha\}\,d\mu(x)\\
&&\le 2^{k_0}\exp\{[\log^+ 2^{k_0}]^\alpha\}+\sum_{k>k_0}\mu({E_k})2^k
\exp\{[\log^+ (2^k) ]^\alpha\}\\
&&\le 2^{k_0}\exp\{[\log^+ 2^{k_0}]^\alpha\}+\sum_{k>k_0}2
\exp\left\{[\log^+ (2^k) ]^\alpha-\left(\log 2^{k-1}\right)^{\exp\left\{-16e\int_{s_0}^{t_0}\beta(r)\,dr\right\}}\right\}\\
&&\le C(\alpha,s_0,t_0,\mathrm{div}_\mu b).
\end{eqnarray*}
This completes the proof.
\end{proof}

In the same way, using Theorem \ref{quant-exp}, we can prove the following quantitative estimate for vector fields with distributional
divergence in
$\Exp_\mu(L)$.

\begin{thm}\label{exp-cr}
Let $b(t,\cdot)\in C^2(\rn)$ for each $t\in [0,T]$ such that
$$\frac{|b(t,x)|}{1+|x|\log^+|x|}\in L^1(0,T;L^\infty),$$
and $\div_\mu b\in L^1(0,T;\Exp_\mu(L))$.
 Then for $0\le s\le t\le T$, there exists a unique flow
$X(s,t,x)$ such that
$$\dfrac{\partial X(s,t,x)}{\partial t}=b(t,X(s,t,x)).$$
Moreover, for $0\le s\le t\le T$ and
 each $p\in
[1,\frac{1}{1-\exp(-4\int_s^t\beta(r)\,dr)}),$
$\beta(r)=\|\mathrm{div}_\mu b(r,\cdot)\|_{\Exp_\mu(L)}$, the
density function $K_{s,t}(x)=\frac{d}{d\mu}(X(s,t,{x})_\#d\mu)$ belongs to
$L^p(\mu)$ and satisfies
\begin{equation*}
\int_{\rn} [K_{s,t}(x)]^p\,d\mu(x)\le C(p,s,t,
\div\,b).
\end{equation*}
\end{thm}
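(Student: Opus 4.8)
\emph{Proof idea.} The plan is to follow the proof of Theorem~\ref{subexp-cr} almost verbatim, with Theorem~\ref{quant-exp} taking the place of Theorem~\ref{quant}; the only substantive change occurs in the final exponentiation step, where the milder iterated logarithm in Theorem~\ref{quant-exp} upgrades the conclusion from $L^{\Phi_\alpha}(\mu)$ to $L^p(\mu)$. First I would note that existence, uniqueness and local Lipschitz regularity of the forward flow $X(s,t,\cdot)$ are immediate from the assumption $b(t,\cdot)\in C^2(\rn)$ together with the growth bound $\frac{|b(t,x)|}{1+|x|\log^+|x|}\in L^1(0,T;L^\infty)$ (see Hale~\cite{ha80}), and that in this smooth regime the density admits the explicit representation~\eqref{density-funct-smooth}. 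I would also record the trivial facts $\|K_{s,t}\|_{L^1(\mu)}=1$ and hence $\mu(\{K_{s,t}>\lambda\})\le 1/\lambda$. Under the growth hypothesis one has $\|\div_\mu b(r,\cdot)\|_{\Exp_\mu(L)}\simeq\|\div b(r,\cdot)\|_{\Exp_\mu(L)}$, so the two formulations of $\beta(r)$ agree up to an absolute constant.

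Next, fix $0\le s_0\le t_0\le T$ and pick $k_0\in\cn$, depending only on $\int_0^T\beta(r)\,dr$, large enough that Theorem~\ref{quant-exp} may be applied to any datum of $L^\infty(\mu)$-norm at most $1$ and $L^1(\mu)$-norm at most $2^{1-k_0}$ on the interval $[s_0,t_0]$. For $k>k_0$ put $E_k=\{x\in\rn:2^{k-1}<K_{s_0,t_0}(x)\le 2^k\}$ and $u_k=\chi_{E_k}$, so $\|u_k\|_{L^\infty(\mu)}\le 1$ and $\|u_k\|_{L^1(\mu)}=\mu(E_k)\le 2^{1-k}$. Exactly as in the proof of Theorem~\ref{subexp-cr}, the function $u(s,x):=u_k(X(s,t_0,x))$ is the unique weak solution in the Gaussian sense of the backward transport problem with terminal datum $u_k$ at time $t_0$; the verification is the same change-of-variables and integration-by-parts computation, using $\partial_s\tilde X(s,t_0,x)=b(s,\tilde X(s,t_0,x))$ and~\eqref{density-funct-smooth}.

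Now comes the point where the argument genuinely differs. Applying Theorem~\ref{quant-exp} with $p=1$ on $[s_0,t_0]$ yields, for $s=s_0$,
$$\left|\log\log\frac{1}{\int_{E_k}K_{s_0,t_0}\,d\mu}-\log\log\frac{1}{\mu(E_k)}\right|\le 4\int_{s_0}^{t_0}\beta(r)\,dr,$$
and, undoing one logarithm, this already produces a \emph{power}-type comparison (in contrast to the $\log\log\log$ of Theorem~\ref{quant}, which only gave a log-power comparison):
$$\int_{E_k}K_{s_0,t_0}\,d\mu\ \le\ \mu(E_k)^{\exp\{-4\int_{s_0}^{t_0}\beta(r)\,dr\}}.$$
Since on $E_k$ one has $K_{s_0,t_0}>2^{k-1}$, the left side is at least $2^{k-1}\mu(E_k)$, and rearranging gives
$$\mu(E_k)\ \le\ 2^{-(k-1)/(1-\exp\{-4\int_{s_0}^{t_0}\beta(r)\,dr\})}.$$

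Finally, for any $p$ with $1\le p<(1-\exp\{-4\int_{s_0}^{t_0}\beta(r)\,dr\})^{-1}$ I would split
$$\int_\rn K_{s_0,t_0}^p\,d\mu\ \le\ 2^{pk_0}+\sum_{k>k_0}2^{pk}\mu(E_k)\ \le\ 2^{pk_0}+\sum_{k>k_0}2^{\,pk-(k-1)/(1-\exp\{-4\int_{s_0}^{t_0}\beta(r)\,dr\})},$$
and the last series converges geometrically precisely because $p<(1-\exp\{-4\int_{s_0}^{t_0}\beta(r)\,dr\})^{-1}$, which gives the claimed bound $C(p,s_0,t_0,\div b)$. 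I do not anticipate a real obstacle: the two places needing care --- the uniform choice of $k_0$ so that Theorem~\ref{quant-exp} applies to every $u_k$ simultaneously, and the verification that $u_k\circ X$ solves the backward Gaussian transport equation --- are handled exactly as in Theorem~\ref{subexp-cr}; the genuinely new input is just the observation above that one fewer logarithm in the stability estimate translates into genuine $L^p$ integrability of the Jacobian.
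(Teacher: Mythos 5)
Your proof is correct and is exactly the argument the paper intends when it says the result follows ``in the same way, using Theorem~\ref{quant-exp}'': you reuse the dyadic decomposition and the backward-transport identification from the proof of Theorem~\ref{subexp-cr}, and the only genuine modification is the final exponentiation, where the two-log comparison $\left|\log\log(1/A)-\log\log(1/B)\right|\le 4\int\beta$ correctly yields $\mu(E_k)\le 2^{-(k-1)/(1-e^{-c})}$ and hence a convergent geometric series for every $p<(1-e^{-c})^{-1}$.
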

\begin{rem}\rm
 Our method to prove the integrability of the density functions yields a sharper estimate than  those from \cite{af09,cr83,cc05}.
It is worth to note that our proof yields that integrability of the density functions has some semigroup property, which is natural.
\end{rem}

\section{Flow in the Gaussian setting}

\noindent
In this section, we will prove part (a) of the Main Theorem. To do this, let us recall the Ornstein-Uhlenbeck semigroup $P_s$. For each $s>0$ and
$f\in L^1(\mu)$, $P_sf(x)$ is defined by
$$P_sf(x)=\int_\rn f(e^{-s}x+\sqrt{1-e^{-2s}}y)\,d\mu(y).$$
 Among other properties of the semigroup $P_s$,
we will need the following:

\begin{enumerate}
\item[(i)] $\div_\mu\,(P_sb)=e^{s}P_s(\div_\mu\,b)$.

\item[(ii)] For each $p\in [1,\infty]$, it holds
$$\|P_sf\|_{L^p(\mu)}\le \|f\|_{L^p(\mu)}.$$

\item[(iii)] For each convex function $\Phi$ on $[0,\infty)$, $\Phi(0)=0$, $\lim_{s\to\infty}\frac{\Phi(s)}{s}=\infty$, it holds
that
$$\|P_sf\|_{L^\Phi(\mu)}\le \|f\|_{L^\Phi(\mu)}.$$
\end{enumerate}
The first two properties can be found from  Bogachev \cite{bo98}, and the third one is a consequence of
(ii) and  Jensen's inequality. Indeed, Jensen's inequality and the $L^1$-boundedness of $P_s$ imply
\begin{eqnarray*}
\int_\rn\Phi\left(\frac{P_sf}{\lambda}\right)\,d\mu&&\le
\int_\rn\int_\rn \,\Phi\left(\frac{f(e^{-s}x+\sqrt{1-e^{-2s}}y)}{\lambda}\right)\,d\mu(y)\,d\mu(x)
\le \int_\rn \Phi\left(\frac{f(x)}{\lambda}\right)\,d\mu(x).
\end{eqnarray*}

We will use the transport
equation theory by DiPerna-Lions \cite{dl89} and
follow some methods used  by  Cipriano-Cruzeiro \cite{cc05}. Due to the fact that
 the divergence of the vector is only sub-exponentially integrable, we need to
overcome some technical difficulties.

In what follows, we will always let $b\in L^1(0,T;W^{1,1}_{\mathrm{loc}})$ that satisfies
$$\frac{|b(t,x)|}{1+|x|\log^+|x|}\in L^1(0,T;L^\infty), $$
and $\div\,b\in L^1(0,T;\Exp_\mu(\frac{L}{\log L}))$.
It follows by an easy calculation that
$$\div_\mu\,b=\div\,b-x\cdot b\in L^1\left(0,T;\Exp_\mu\left(\frac{L}{\log L}\right)\right).$$
For each $\ez>0$, let $b_\ez=P_\ez b$.

\begin{lem}\label{molifier}
For each $\ez>0$, $P_\ez b\in C^\infty(\rn)$ satisfies
$$\frac{|P_\ez b(t,x)|}{1+|x|\log^+|x|}\in L^1(0,T;L^\infty).$$
\end{lem}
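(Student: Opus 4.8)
The plan is to derive a pointwise bound of the form $|P_\epsilon b(t,x)|\le C(\epsilon)\,h(t)\,(1+|x|\log^+|x|)$ with $h\in L^1(0,T)$, and to obtain the smoothness separately by differentiating under the integral sign. Throughout, fix $\epsilon>0$, put $a=e^{-\epsilon}$ and $c=\sqrt{1-e^{-2\epsilon}}$ (so $a,c\in(0,1)$), and write $\phi(r)=r\log^+r$ for $r\ge 0$; by \eqref{hyp-b-1} there is $h\in L^1(0,T)$, namely $h(t)=\big\|b(t,\cdot)/(1+|\cdot|\log^+|\cdot|)\big\|_{L^\infty}$, with $|b(t,z)|\le h(t)(1+\phi(|z|))$ for a.e.\ $z$.

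For smoothness I would first rewrite, via the affine change of variables $z=ax+cy$,
\[
P_\epsilon b(t,x)=\frac{1}{(2\pi c^2)^{n/2}}\int_{\rn}b(t,z)\,\exp\!\Big(-\tfrac{|z-ax|^2}{2c^2}\Big)\,dz .
\]
Every $x$-derivative of the kernel is a polynomial in $(z,x)$ times the same Gaussian, and for $x$ ranging in a fixed ball this is dominated, after multiplying by $|b(t,z)|\le h(t)(1+\phi(|z|))$, by an $L^1_z$ majorant independent of $x$; hence differentiation under the integral sign applies and $P_\epsilon b(t,\cdot)\in C^\infty(\rn)$ for a.e.\ $t$.

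For the growth bound I would start from
\[
|P_\epsilon b(t,x)|\le h(t)\int_{\rn}\big(1+\phi(|ax+cy|)\big)\,d\mu(y),
\]
and use that $\phi$ is nondecreasing and convex on $[0,\infty)$ with $\phi(0)=0$ ($\phi$ is linear on $[0,e]$, equals $r\log r$ on $[e,\infty)$, and its one-sided derivative jumps up at $r=e$, so it is nondecreasing). From $|ax+cy|\le a|x|+c|y|=\tfrac12(2a|x|)+\tfrac12(2c|y|)$, convexity gives $\phi(|ax+cy|)\le\tfrac12\phi(2a|x|)+\tfrac12\phi(2c|y|)$; integrating in $y$, the term $\int_{\rn}\phi(2c|y|)\,d\mu(y)$ is a finite $\epsilon$-dependent constant (a Gaussian moment of something of order $|y|\log^+|y|$), while $\phi(2a|x|)\le 2a\,(1+|\log 2a|)\,\phi(|x|)$ since $\log^+(\lambda r)\le(1+|\log\lambda|)\log^+r$ (using $\log^+\ge1$). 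Combining, $|P_\epsilon b(t,x)|\le C(\epsilon)\,h(t)\,(1+|x|\log^+|x|)$, so $\big\|P_\epsilon b(t,\cdot)/(1+|\cdot|\log^+|\cdot|)\big\|_{L^\infty}\le C(\epsilon)\,h(t)\in L^1(0,T)$, which is the assertion (and en passant the same computation shows $P_\epsilon b(t,x)$ is well defined).

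The only slightly delicate points are the convexity of $\phi(r)=r\log^+r$ and its controlled sub-multiplicativity under dilations (this is where the convention $\log^+\ge1$ is convenient), together with the elementary finiteness of $\int_{\rn}|y|\log^+|y|\,d\mu(y)$; I expect these to be the main, and essentially only, obstacle. Note that the contraction $a=e^{-\epsilon}<1$ plays no essential role — replacing it by any fixed positive factor would merely worsen the constant $C(\epsilon)$.
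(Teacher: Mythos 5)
Your proof is correct and follows the same overall structure as the paper's: rewrite $P_\ez b$ as a Gaussian convolution to get smoothness, then dominate $|b(t,z)|$ by $h(t)(1+\phi(|z|))$ and show $\int\phi(|ax+cy|)\,d\mu(y)\le C(1+\phi(|x|))$. The only genuine difference is in how this last inequality is obtained. The paper uses the elementary arithmetic fact $\log(a+b)\le\log a+\log b$ for $a,b\ge 2$ to pass from $\phi(|x|+|y|)$ to $C(\phi(|x|)+\phi(|y|))$ directly, whereas you invoke convexity of $\phi(r)=r\log^+r$ together with the sub-multiplicativity $\phi(\lambda r)\le\lambda(1+|\log\lambda|)\phi(r)$. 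Both are sound (your convexity claim checks out: $\phi$ is linear on $[0,e]$, equals $r\log r$ with second derivative $1/r>0$ on $(e,\infty)$, and the one-sided derivative jumps from $1$ to $2$ at $r=e$; the sub-multiplicativity uses $\log^+r\ge1$ to absorb the constant $|\log\lambda|$). Your route is a bit more systematic and would generalize more easily to other convex growth gauges, at the price of needing the convexity/sub-multiplicativity lemma; the paper's route is shorter but somewhat ad hoc. One cosmetic point: the paper observes that the resulting constant $C$ can be taken independent of $\ez$ (since $e^{-\ez},\sqrt{1-e^{-2\ez}}\in(0,1)$ and the bound $|z|\le|x|+|y|$ does not see them), while you are content with $C(\ez)$; this uniformity is not needed for the lemma as stated, but it is what makes the subsequent remark ``$b_\ez$ satisfies the requirements from Theorem~\ref{subexp-cr} \emph{uniformly in} $\ez$'' immediate, so it is worth noting that your argument also yields it (the factors $2a(1+|\log 2a|)$ and $\int\phi(2c|y|)\,d\mu(y)$ both stay bounded as $a,c$ range over $(0,1)$).
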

\begin{proof}
By making change of variables, we see that
\begin{eqnarray*}
P_\ez b(t,x)&&=\int_\rn b(t,e^{-\ez}x+\sqrt{1-e^{-2\ez}}y)\frac{1}{(2\pi)^{n/2}}\exp\left\{-\frac{|y|^2}{2}\right\}\,dy\\
&&=\frac{1}{(2\pi)^{n/2}(1-e^{-2\ez})^{n/2}}\int_\rn b(t,z)\exp\left\{-\frac{|z-e^{-\ez}x|^2}{2(1-e^{-2\ez})}\right\}\,dz.
\end{eqnarray*}
Then it is obvious that $P_\ez b(t,x)\in C^\infty(\rn)$ for each $t>0$. To see that
$$\frac{P_\ez b(t,x)}{1+|x|\log^+|x|}\in L^1(0,T;L^\infty),$$
it suffices to show that for each $t>0$
$$\left\|\frac{P_\ez b(t,x)}{1+|x|\log^+|x|}\right\|_{L^\infty}\le C \left\|\frac{b(t,x)}{1+|x|\log^+|x|}\right\|_{L^\infty}.$$
By the fact $\log(a+b)\le \log a+\log b$ for $a,b\ge 2$, we see that
\begin{eqnarray*}
|P_\ez b(t,x)|&&\le \int_\rn |b(t,e^{-\ez}x+\sqrt{1-e^{-2\ez}}y)|\frac{1}{(2\pi)^{n/2}}\exp\left\{-\frac{|y|^2}{2}\right\}\,dy\\
&&\le \left\|\frac{b(t,\cdot)}{1+|\cdot|\log^+|\cdot|}\right\|_{L^\infty}\int_\rn (1+|z|\log^+|z|)\bigg|_{ z=e^{-\ez}x+\sqrt{1-e^{-2\ez}}y}\,d\mu(y),
\end{eqnarray*}
where
\begin{eqnarray*}
\int_\rn (1+|z|\log^+|z|)\bigg|_{ z=e^{-\ez}x+\sqrt{1-e^{-2\ez}}y}\,d\mu(y)&&\le \int_\rn C(1+|x|\log^+|x|+|y|\log^{+}|y|)\,d\mu(y)\\
&&\le  C(1+|x|\log^+|x|),
\end{eqnarray*}
where $C$ does not depend on $\ez$. The proof is completed.
\end{proof}

Therefore, for each $\ez>0$, it follows from Lemma \ref{molifier} that
$b_\ez$ satisfies the requirements from Theorem \ref{subexp-cr} uniformly in $\ez$.
Denote by $X_\ez(s,t,x)$ the unique flow arising from the equation
$$\dfrac{\partial X_\ez(s,t,x)}{\partial t}=b_\ez(t,X_\ez(s,t,x)).$$
Denote by $K_{s,t,\ez}(x)$ the density function of $X_\ez(s,t,\cdot)_{\#}\,d\mu$.
The existence of the flow $X(s,t,x)$ will follow by establishing  an accumulation point of $\{X_\ez(s,t,x)\}_{\ez}$
via the following several steps.

Given a sequence $X_k$ of functions defined on some measurable space
$(\mathscr{M},\nu)$ with values in a Banach space $\mathscr{N}$ (endowed with the norm $\|\cdot\|$), we say that
$X_k$ converges to $X$ in $L^0(\nu)$ if for each fixed $\gz > 0$ it holds
$$\nu(\{x\in \mathscr{M}:\, \|X_k(x)-X(x)\|>\gz\})\to 0, \quad \ \mathrm{as}\  k\to\infty.$$

In what follows, let  $\mathcal{L}^1$ be the one dimensional Lebesgue measure.
\begin{lem}\label{mc-flow}
Let $0\le s\le t\le T$. There exist a subsequence $\{\epsilon_k\}_{k\in\cn}$ and a Borel map $X(s,t,x)$ such that:
\begin{itemize}
\item[(i)] $X_{\ez_k}(s,\cdot,\cdot) $ converges to $X(s,\cdot,\cdot)$ as $k\to\infty$, both in $L^0(\mathcal{L}^1\times\mu)$ and almost everywhere on  $[s,T]\times \rn$.\item[(ii)] For each   fixed $t\in [s,T]$, $X_{\ez_k}(s,t,\cdot)$ converges to $X(s,t,\cdot)$ as $k\to\infty$, both in $L^0(\mu)$ and almost everywhere on $\rn$.
\end{itemize}
\end{lem}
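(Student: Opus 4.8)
The idea is to obtain $X(s,\cdot,\cdot)$ as an accumulation point of the smooth approximating flows $X_\ez(s,\cdot,\cdot)$, by combining \emph{a priori} bounds coming from the growth hypothesis \eqref{hyp-b-1} and the uniform Jacobian estimates of Theorem \ref{subexp-cr} with the compactness and uniqueness for the transport equation \eqref{tran-eq} provided by Theorem \ref{tran-main}, in the spirit of the stability theory of DiPerna--Lions \cite{dl89}.

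\emph{Uniform bounds.} Write $|b(t,x)|\le g(t)\,(1+|x|\log^+|x|)$ with $g\in L^1(0,T)$; by Lemma \ref{molifier} the same estimate holds for $b_\ez=P_\ez b$ with a fixed $g$, uniformly in $\ez\in(0,1]$. Applying Gronwall's inequality to $t\mapsto\log^+|X_\ez(s,t,x)|$ gives $\log^+|X_\ez(s,t,x)|\le C\,(1+\log^+|x|)$ for all $0\le s\le t\le T$, with $C$ depending only on $\|g\|_{L^1(0,T)}$, and the same bound for the inverse maps. In particular $\{X_\ez(s,\cdot,\cdot)\}_\ez$ is pointwise bounded on $[s,T]\times\rn$, hence tight, and plugging this into the ODE yields the $\ez$-uniform modulus of continuity $|X_\ez(s,t,x)-X_\ez(s,t',x)|\le h(x)\int_{t'}^t g(\tau)\,d\tau$, with $h$ independent of $\ez$. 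Finally, since $\div_\mu(P_\ez b)=e^\ez P_\ez(\div_\mu b)$ and $P_\ez$ does not increase the relevant Orlicz norms (property (iii)), Theorem \ref{subexp-cr} applied to $b_\ez$ bounds $\int_\rn\Phi_\alpha(K_{s,t,\ez})\,d\mu$ uniformly in $\ez$, and similarly for the densities of the inverse maps.

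\emph{Convergence through the transport equation.} Fix a countable set $\{\psi_j\}\subset C_c(\rn)$ that separates points. For the fixed initial time $s$, the function $v^j_\ez(t,y):=\psi_j(\tilde X_\ez(s,t,y))$, where $\tilde X_\ez(s,t,\cdot)$ is the inverse of $X_\ez(s,t,\cdot)$, is the classical (hence weak) solution of the transport equation \eqref{tran-eq} with $b$ replaced by the smooth field $b_\ez$ and datum $\psi_j$ at time $s$. Since $\|v^j_\ez\|_\infty\le\|\psi_j\|_\infty$ and $b_\ez\to b$ in $L^1(0,T;W^{1,1}_\loc)$ with $\div_\mu b_\ez\to\div_\mu b$ in $L^1(0,T;\Exp_\mu(\tfrac{L}{\log L}))$, weak-$*$ compactness produces a limit which is a weak solution of \eqref{tran-eq} with datum $\psi_j$, and the uniqueness in Theorem \ref{tran-main} shows this limit is independent of the subsequence. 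Running the same argument for the renormalized solutions $(v^j_\ez)^2$ and using that the limit equation renormalizes as well (so $(v^j)^2$ is its unique solution with datum $\psi_j^2$), one gets $\|v^j_\ez(t,\cdot)\|_{L^2(B_R)}\to\|v^j(t,\cdot)\|_{L^2(B_R)}$ for every $R$, hence strong $L^2_\loc$ convergence and, along a further subsequence, convergence a.e. Diagonalizing over $j$, using the tightness of the first step and that $\{\psi_j\}$ separates points, we obtain a Borel map $\tilde X(s,\cdot,\cdot)$ with $\tilde X_{\ez_k}(s,t,y)\to\tilde X(s,t,y)$ for a.e. $(t,y)\in[s,T]\times\rn$; the $\ez$-uniform equicontinuity in $t$ upgrades this to convergence for every $t\in[s,T]$ and a.e.\ $y$. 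Inverting, and using the uniform bounds on the densities of $\tilde X_\ez(s,t,\cdot)_\#\mu$ and $X_\ez(s,t,\cdot)_\#\mu$ to see that inversion is continuous for convergence in measure, we conclude that $X_{\ez_k}(s,t,\cdot)$ converges a.e.\ to a Borel map $X(s,t,\cdot)$; since $\mathcal{L}^1\times\mu$ is finite on $[s,T]\times\rn$, a.e.\ convergence yields convergence in $L^0$, which is (i). Part (ii) is the restriction to a single $t$, legitimate precisely because of the equicontinuity in $t$ (otherwise Fubini would only give it for a.e.\ $t$).

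\emph{Main obstacle.} The delicate point is the convergence step: since $b$ is only $W^{1,1}_\loc$ and its divergence is merely subexponentially integrable, one cannot bound the difference of two approximate solutions directly (their spatial gradients are not controlled pointwise), so the convergence must be routed through weak-$*$ compactness, renormalization and uniqueness for \eqref{tran-eq}; it is here that the full strength of Theorem \ref{tran-main} — and behind it the quantitative estimate of Theorem \ref{quant} — together with the uniform Jacobian bounds is genuinely used. Additional care is needed in bookkeeping the two time variables and in passing, via those Jacobian bounds, from the inverse flow (the natural object for the transport equation issued from time $s$) back to the forward flow $X$.
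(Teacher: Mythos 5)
Your proposal runs on the same engine as the paper --- extract a weak-$\ast$ limit of bounded transport-equation solutions built from the approximating flows, use renormalization (comparing the limit of $(\cdot)^2$ with the square of the limit) plus the uniqueness of Theorem~\ref{tran-main} to upgrade weak-$\ast$ to convergence in measure, then use tightness and the arbitrariness of the test function to pass to a.e.\ convergence of the flows themselves.  The paper applies this directly to the compositions $\beta(X_{\ez}^i)$ (with $\beta$ bounded continuous, using the backward transport problem as in the Claim inside the proof of Theorem~\ref{subexp-cr}), gets tightness from the uniform $L^1(\mu)$ bound $\|X^i_{\ez}(s,t,\cdot)\|_{L^1(\mu)}\le C$ derived from the ODE together with the Jacobian bound and Lemma~\ref{duality}, and uses the truncations $\beta_M$ to produce a Cauchy-in-measure argument for $X^i_{\ez_k}$.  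It then constructs $\tilde X$ by rerunning the \emph{same} argument on the backward flows, rather than by inverting.

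The genuine gap in your proposal is the inversion step.  Your argument establishes convergence in measure of the inverse maps $\tilde X_{\ez_k}(s,\cdot,\cdot)$, and then asserts that "inversion is continuous for convergence in measure" using the uniform Jacobian bounds.  This is not a routine step: from $\tilde X_{\ez_k}\to\tilde X$ in measure you would want to conclude $X_{\ez_k}\to X$ in measure with $X=\tilde X^{-1}$, and the natural route is to compose with $X_{\ez_k}$ and use $\tilde X_{\ez_k}\circ X_{\ez_k}=\mathrm{id}$; but passing quantities of the form $F(X_{\ez_k}(s,t,x))$ to the limit precisely requires the measure-control estimates of Lemmas~\ref{measure-control} and~\ref{measure-convergence}, which in the paper are derived \emph{after} (and from) Lemma~\ref{mc-flow}.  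As written your inversion is either circular or leaves a real hole that needs an independent argument.  A cleaner fix, and what the paper actually does, is to not invert at all: run the compactness--renormalization--uniqueness machinery once on the forward flows to produce $X$, and then separately on the backward flows to produce $\tilde X$, checking $X\circ\tilde X=\tilde X\circ X=\mathrm{id}$ only afterwards (this is~\eqref{identity-flow}).  Two smaller points: the equicontinuity-in-$t$ you invoke to pass from a.e.\ $(t,y)$ to "each $t$" is not needed if one notes, as the paper does, that the $L^2(\mu)$ convergence $\beta(X_{\ez_k}^i(s,t,\cdot))\to v^i_\beta$ already holds for each fixed $t$; and the Gronwall pointwise bound, while true, is replaced in the paper by the weaker but sufficient tightness coming from the uniform $L^1(\mu)$ estimate.
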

\begin{proof}
Let $\beta$ be a continuous and bounded function on $\rr$. Denote
$X^i_{\ez}(s,t,x)$ the $i$-th component of $X_{\ez}(s,t,x)$. Then
$\beta(X^i_{\ez}(s,t,x))$ and $\beta(X^i_{\ez}(s,t,x))^2$ are bounded
sequences in $L^\infty(s,T;L^\infty)$. By the weak-$\ast$
convergence of $L^\infty(s,T;L^\infty)$, we see that there exists a
subsequence $\ez_k$ such that $\beta(X^i_{\ez_k}(s,t,x))$
 and $\beta(X^i_{\ez_k}(s,t,x))^2$
 converge in weak-$\ast$ topology of $L^\infty(s,T;L^\infty)$ to $v_\beta^i$ and $w_\beta^i$, respectively.

 On the other hand, $\beta(X^i_{\ez}(s,t,x))$ and $\beta(X^i_{\ez}(s,t,x))^2$ are bounded solutions to the transport equation
 corresponding to the final values $\beta(x_i)$ and $\beta(x_i)^2$, respectively; see the proof of Theorem \ref{subexp-cr}.

By using the well-posedness of the transport equation, Theorem \ref{tran-main}, and the renormalization property
 of solutions in $L^\infty(s,T;L^\infty)$ (cf. \cite{dl89,a04}), we can conclude that
 $v_\beta^i$ and $w_\beta^i$ are bounded solutions to the transport equation
 with vector fields $b$ corresponding to the initial values $\beta(x_i)$ and $\beta(x_i)^2$, respectively,
 and therefore $(v_\beta^i)^2=w_\beta^i$.

 Then, by the fact $1\in L^1(\mu)$, we can conclude that for each $t\in [s,T]$ it holds
\begin{equation}\label{measure-beta}
\lim_{k\to\infty}\int_\rn [\beta(X^i_{\ez}(s,t,x))-v_\beta^i]^2\,d\mu=0.
\end{equation}

 Now we prove that the arbitrariness of $\beta$ implies that $X^i_{\ez_k}(s,t,x)$
 converges in measure to some function $X^i(s,t,x)$.
 Indeed, by Lemma \ref{molifier} we have that
 $$\frac{|b_\ez(t,x)|}{1+|x|\log^+|x|}\in L^1(0,T;L^\infty)$$
 and hence, $b_\ez(t,x)\in L^1(0,T;\Exp_\mu(L))$, while by Theorem
 \ref{subexp-cr} and $L^{\Phi_\alpha}(\mu)\subset L\log L(\mu)$ for
 any $\alpha>0$,
 we see that $K_\ez\in L^\infty(0,T;L\log L(\mu))$. These together with Lemma \ref{duality} imply that
 \begin{eqnarray*}
 \|X^i_{\ez_k}(s,t,\cdot)\|_{L^1(\mu)}&&\le \int_\rn \left|x_i+\int_s^tb_\ez(r,X_\ez(s,r,x))\,dr\right|\,d\mu(x)\\
 &&\le C+\int_\rn\int_s^T |b_\ez(r,x)|\,K_{s,r,\ez}(x)\,d\mu\,dr\\
 &&\le C+2 \int_s^T\|b_\ez\|_{\Exp_\mu(L)}\|K_{s,r,\ez}\|_{L\log L(\mu)}\,dr\\
 &&\le C,
 \end{eqnarray*}
i.e., $X_{\ez_k}(s,\cdot,\cdot)\in L^\infty(s,T;L^1(\mu))$, and $X_{\ez_k}(s,t,\cdot)\in L^1(\mu)$ for each $t$,
uniformly in $\ez$.

 Denote by $\nu$ the product measure $\mathcal{L}^1\times\mu$ on $[s,T]\times \rn$. Given a fixed $\gamma>0$,
 for each $\delta>0$, there exists an $M>0$ such that for all $\ez_k$,
 $$\nu(\left\{(t,x):\,|X_{\ez_k}(s,t,x)|>M\right\})<\delta.$$
 On the other hand, let $\beta_M\in C^1(\rr,\rr)$ such that $\beta_M:\,\rr\mapsto [-2M,2M]$ and $\beta_M(t)=t$
 for all $|t|\le M$. Then from \eqref{measure-beta} we see that there exists $k_0\in \cn$, such that for all $k,j>k_0$, it holds that$$
\aligned
\nu(\{(t,x): |X^i_{\ez_k}(s,t,x)-X^i_{\ez_j}(s,t,x)|>\gamma\})
&\leq \nu(\{(t,x): |X^i_{\ez_k}(s,t,x)|>M\})\\&+\nu(\{(t,x): |X^i_{\ez_j}(s,t,x)|>M\})\\
&+\nu(\{(t,x):\,|\beta_M(X^i_{\ez_k}(s,t,x))-\beta_M(X^i_{\ez_j}(s,t,x))|>\gamma\})<3\delta
\endaligned$$
and so we can conclude that $\{X^i_{\ez_k}\}_k$ is a Cauchy sequence in measure. Therefore, $X^i_{\ez_k}(s,t,x)$ converges in measure to some
function $X^i(s,t,x)$.

 Passing to a further subsequence if necessary, we can conclude that $X_{\ez_k}(s,t,x)$ converges
 in $L^0(\mathcal{L}^1\times\mu)$ and almost everywhere to $X(s,t,x)$ on $[s,T]\times \rn$.
 Moreover, it follows that for each $t\in [s,T]$, $X_{\ez_k}(s,t,x)$ converges in $L^0(\mu)$ and almost everywhere to $X(s,t,x)$.
\end{proof}

\begin{lem}\label{density-est}
Let $X(s,t,x)$ be as in Lemma \ref{mc-flow}. Under the assumptions of
the Main Theorem, for each $0\le s\le t\le T$, the image measure
$X(s,t,\cdot)_{\#}d\mu$ is absolutely continuous with respect to
$\mu$. Moreover,  the density function $K_{s,t}(x)=\frac{d}{d\mu}
(X(s,t,{x})_{\#}d\mu)$ belongs to the Orlicz space $L^{\Phi_\alpha}(\mu)$
for each $0<\alpha<\exp\left\{-16e^2\int_s^t\beta(r)\,dr\right\}$,
where $\beta(r)=\|\mathrm{div}_\mu
b(r,\cdot)\|_{\Exp_\mu(\frac{L}{\log L})}$.
\end{lem}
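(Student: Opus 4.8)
The plan is to realise $K_{s,t}$ as a weak $L^1(\mu)$ limit of the densities $K_{s,t,\ez_k}$ of the regularised flows $X_{\ez_k}$, and to transfer the bound of Theorem~\ref{subexp-cr} across the limit. The first task is to check that the estimate of Theorem~\ref{subexp-cr} holds for the fields $b_\ez=P_\ez b$ \emph{uniformly} in $\ez$. By Lemma~\ref{molifier} the growth condition \eqref{hyp-b-1} holds for $b_\ez$ with a constant independent of $\ez$. Moreover, property (i) of the Ornstein--Uhlenbeck semigroup gives $\div_\mu b_\ez=e^{\ez}P_\ez(\div_\mu b)$, so property (iii) applied to the (convex) defining function of $\Exp_\mu(\frac{L}{\log L})$ yields $\beta_\ez(r):=\|\div_\mu b_\ez(r,\cdot)\|_{\Exp_\mu(\frac{L}{\log L})}\le e^{\ez}\beta(r)\le e\,\beta(r)$ for all $\ez\le 1$. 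Hence, if we fix $0<\alpha<\exp\{-16e^2\int_s^t\beta(r)\,dr\}$, then $\alpha<\exp\{-16e\int_s^t\beta_\ez(r)\,dr\}$ for every $\ez\le1$, and inspecting the proof of Theorem~\ref{subexp-cr} (the threshold $k_0$ and the resulting convergent series depend monotonically on $\int_0^T\beta_\ez\le e\int_0^T\beta$) one gets $\int_\rn\Phi_\alpha(K_{s,t,\ez}(x))\,d\mu(x)\le C$ with $C=C(\alpha,s,t,\div_\mu b)$ independent of $\ez\le1$.

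Next, since $\Phi_\alpha(r)/r=\exp\{[\log^+r]^\alpha\}\to\infty$ as $r\to\infty$, this uniform bound makes $\{K_{s,t,\ez_k}\}_k$ uniformly integrable in $L^1(\mu)$ (de la Vall\'ee Poussin), while $\|K_{s,t,\ez_k}\|_{L^1(\mu)}=1$. By the Dunford--Pettis theorem, after passing to a further subsequence of the $\{\ez_k\}$ of Lemma~\ref{mc-flow} (not relabelled), $K_{s,t,\ez_k}\rightharpoonup K$ weakly in $L^1(\mu)$ for some nonnegative $K\in L^1(\mu)$. To identify $K$, take any bounded continuous $f$ on $\rn$: the change-of-variables identity for the regularised flow gives $\int_\rn f(X_{\ez_k}(s,t,x))\,d\mu(x)=\int_\rn f(y)\,K_{s,t,\ez_k}(y)\,d\mu(y)$; the right-hand side converges to $\int_\rn fK\,d\mu$ by weak $L^1$ convergence, and the left-hand side converges to $\int_\rn f(X(s,t,x))\,d\mu(x)$ by Lemma~\ref{mc-flow}(ii) (almost everywhere convergence of $X_{\ez_k}(s,t,\cdot)$) together with dominated convergence ($\mu$ finite, $f$ bounded). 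Since bounded continuous functions determine measures on $\rn$, $X(s,t,\cdot)_{\#}d\mu=K\,d\mu$, so this image measure is absolutely continuous with respect to $\mu$ and $K_{s,t}=K$.

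Finally, since $\Phi_\alpha$ is convex with $\Phi_\alpha(0)=0$, the functional $g\mapsto\int_\rn\Phi_\alpha(g)\,d\mu$ is convex and strongly, hence weakly, lower semicontinuous on $L^1(\mu)$; therefore $\int_\rn\Phi_\alpha(K_{s,t})\,d\mu\le\liminf_k\int_\rn\Phi_\alpha(K_{s,t,\ez_k})\,d\mu\le C<\infty$. By convexity $\int_\rn\Phi_\alpha(K_{s,t}/\lambda)\,d\mu\le\lambda^{-1}\int_\rn\Phi_\alpha(K_{s,t})\,d\mu\le1$ for $\lambda=\max\{1,C\}$, so $K_{s,t}\in L^{\Phi_\alpha}(\mu)$. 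As $\alpha$ was an arbitrary element of $(0,\exp\{-16e^2\int_s^t\beta(r)\,dr\})$, this proves the claim.

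The main obstacle is the uniform-in-$\ez$ control in the first paragraph: one must verify that mollifying $b$ by $P_\ez$ degrades neither the subexponential integrability of the divergence nor the constant in Theorem~\ref{subexp-cr}, which is precisely where Lemma~\ref{molifier} and the commutation/contraction properties (i) and (iii) of the Ornstein--Uhlenbeck semigroup enter. The identification of the weak $L^1$ limit with the pushforward density in the third paragraph is the secondary delicate point, since it combines almost everywhere convergence of the flows with weak convergence of the densities.
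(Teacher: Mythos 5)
Your proposal is correct and follows essentially the same route as the paper: uniform $L^{\Phi_\alpha}$-bounds on $K_{s,t,\ez}$ via Lemma~\ref{molifier}, the Ornstein--Uhlenbeck properties (i),(iii), and Theorem~\ref{subexp-cr}; a compactness argument to extract a limit density; and identification of that limit with the pushforward of $\mu$ by testing against functions and invoking Lemma~\ref{mc-flow}(ii). The only technical divergence is in the compactness step: the paper extracts a weak (in effect weak-$*$) limit directly in $L^{\Phi_\alpha}(\mu)$, whereas you pass through weak $L^1(\mu)$ compactness via de la Vall\'ee Poussin and Dunford--Pettis and then recover membership in $L^{\Phi_\alpha}(\mu)$ from convex weak lower semicontinuity of $g\mapsto\int\Phi_\alpha(g)\,d\mu$. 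Your variant is a little more explicit (it avoids any implicit appeal to the dual structure of the Orlicz space $L^{\Phi_\alpha}$), but the two are interchangeable here and rest on the same underlying ingredients.
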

\begin{proof}
Since $b_\ez=P_\ez b$, by the property of the  Ornstein-Uhlenbeck semigroup, we see that for each $\ez<1$ and each $t\in [s,T]$,
it holds
$$\|\mathrm{div}_\mu b_\ez(t,\cdot)\|_{\Exp_\mu(\frac{L}{\log L})}\le e\|\mathrm{div}_\mu b(t,\cdot)\|_{\Exp_\mu(\frac{L}{\log L})}.$$
For each $t\in [s,T]$ and each $0<\alpha<\exp\left\{-16e^2\int_s^t\beta(r)\,dr\right\}$,
by Theorem \ref{subexp-cr},  we see that the density function
of  $K_{s,t,\ez}(x)=\frac{d}{d\mu}(X_\ez(s,t,{x})_{\#}d\mu)$
 is uniformly bounded in $L^{\Phi_\alpha}(\mu)$. Therefore, there exists a subsequence $\{\ez_k\}$ and $K_{s,t}\in L^{\Phi_\alpha}(\mu)$
 such that
{ $$ K_{s,t,\ez_k}\rightharpoonup K_{s,t} \  \mathrm{ in } \ L^{\Phi_\alpha}(\mu),$$}
 i.e., {$K_{s,t,\ez_k}$} weakly converges to {$K_{s,t}$} in $L^{\Phi_\alpha}(\mu)$.

 Finally, for each compactly supported continuous function $\psi$, we see that
 \begin{eqnarray*}
\int_\rn \psi(X(s,t,x))\,d\mu(x)&&=\lim_{k\to\infty}\int_\rn \psi(X_{\ez_k}(s,t,x))\,d\mu(x)\\
&&=\lim_{k\to\infty} \int_\rn
\psi(x)K_{s,t,\ez_k}(x)\,d\mu(x)=\int_\rn \psi(x)K_{s,t}(x)\,d\mu(x),
 \end{eqnarray*}
as desired.
\end{proof}

\begin{lem}\label{measure-control}
Let $X(s,t,x)$ be as in Lemma \ref{mc-flow}. Under the assumptions of
the Main Theorem, for each open set $E$ with sufficient small
$\mu$-measure, it holds that for $0\le s\le t\le T$
$$\log\log\log\dfrac{1}{\int_\rn \chi_E(X(s,t,x))\,d\mu}\ge \log\log\log\dfrac{1}{\mu(E)}- 16e^2\int_s^t\beta(r)\,dr.$$
\end{lem}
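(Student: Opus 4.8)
\emph{Plan of proof.} The idea is to run the a priori bound of Theorem~\ref{subexp-cr} on the regularized fields $b_\ez=P_\ez b$ with $\chi_E$ as datum, and then pass to the limit along the subsequence $\{\ez_k\}$ of Lemma~\ref{mc-flow}. First I would fix $0\le s\le t\le T$ and make precise what ``sufficiently small $\mu$-measure'' means: $E$ is open and $\ez_0:=\mu(E)$ is so small that the smallness hypotheses of Theorem~\ref{quant} (with $M=1$, $p=1$) hold when there $32e\int_0^T\beta(r)\,dr$ is replaced by $32e^2\int_0^T\beta(r)\,dr$. By the Ornstein--Uhlenbeck bound recalled in the proof of Lemma~\ref{density-est} (a consequence of properties (i) and (iii) of $P_\ez$), one has $\beta_\ez(r):=\|\div_\mu b_\ez(r,\cdot)\|_{\Exp_\mu(\frac{L}{\log L})}\le e\beta(r)$ for every $\ez<1$; since the left-hand side of the smallness condition of Theorem~\ref{quant} is nondecreasing in the integral of the divergence-norm that is subtracted there, this choice of $E$ makes the hypotheses of Theorem~\ref{quant} valid for $b_\ez$ uniformly in $\ez<1$. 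Moreover $b_\ez$ meets the smoothness and growth conditions of Theorem~\ref{subexp-cr} by Lemma~\ref{molifier}.

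Next I would invoke, for the smooth field $b_\ez$, the Claim in the proof of Theorem~\ref{subexp-cr}: the function $u_\ez(r,x):=\chi_E(X_\ez(r,t,x))$, $r\in[s,t]$, is the (unique) weak solution in the Gaussian setting of the backward transport equation with final datum $\chi_E$ at time $t$. Since $\|\chi_E\|_{L^\infty(\mu)}=1$ and $\|\chi_E\|_{L^1(\mu)}=\mu(E)<\ez_0$, Theorem~\ref{quant} with $p=1$, applied in backward time exactly as in the derivation of the estimate for $\|u(s,\cdot)\|_{L^1(\mu)}$ in the proof of Theorem~\ref{subexp-cr}, gives, writing $I_\ez:=\|u_\ez(s,\cdot)\|_{L^1(\mu)}=\int_\rn\chi_E(X_\ez(s,t,x))\,d\mu(x)$,
\begin{equation*}
\log\log\log\frac{1}{I_\ez}\ge\log\log\log\frac{1}{\mu(E)}-16e\int_s^t\beta_\ez(r)\,dr\ge\log\log\log\frac{1}{\mu(E)}-16e^2\int_s^t\beta(r)\,dr,
\end{equation*}
the smallness of $\mu(E)$ ensuring that all arguments lie in the interval $(0,e^{-1})$ on which $r\mapsto\log\log\log\frac{1}{r}$ is well defined and strictly decreasing. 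Inverting this decreasing function yields, for every $\ez<1$,
\begin{equation*}
I_\ez\le\exp\left\{-\exp\left\{\exp\left\{\log\log\log\frac{1}{\mu(E)}-16e^2\int_s^t\beta(r)\,dr\right\}\right\}\right\}=:R,\qquad R<e^{-1}.
\end{equation*}

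Finally I would let $\ez=\ez_k\to0$. Since $E$ is open, $\chi_E$ is lower semicontinuous, and by Lemma~\ref{mc-flow}(ii) $X_{\ez_k}(s,t,\cdot)\to X(s,t,\cdot)$ $\mu$-a.e.; hence $\chi_E(X(s,t,x))\le\liminf_k\chi_E(X_{\ez_k}(s,t,x))$ for $\mu$-a.e.\ $x$, and Fatou's lemma gives $\int_\rn\chi_E(X(s,t,x))\,d\mu(x)\le\liminf_k I_{\ez_k}\le R$. Applying once more the decreasing map $r\mapsto\log\log\log\frac{1}{r}$ (legitimate since $R<e^{-1}$) produces the asserted inequality, which is in any case trivial when the left-hand integral vanishes.

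The two points I expect to require care are the uniformity in $\ez$ of the smallness assumptions of Theorem~\ref{quant} — which is exactly what the factor $e$ in the Ornstein--Uhlenbeck estimate of the first paragraph buys us — and the interchange of limit and integral in the last step, for which it is essential both that $E$ be open (so that $\chi_E$ is lower semicontinuous) and that $X_{\ez_k}\to X$ converge $\mu$-a.e.\ and not merely in measure.
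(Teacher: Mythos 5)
Your proposal is correct and follows essentially the same route as the paper's proof: apply Theorem~\ref{quant} to the regularized fields $b_{\ez}$ (with the Ornstein--Uhlenbeck bound $\|\div_\mu b_\ez\|_{\Exp_\mu(\frac{L}{\log L})}\le e\|\div_\mu b\|_{\Exp_\mu(\frac{L}{\log L})}$ producing the $e^2$), and then pass to the limit via Fatou's lemma, using that $E$ is open so $\chi_E$ is lower semicontinuous and $X_{\ez_k}(s,t,\cdot)\to X(s,t,\cdot)$ $\mu$-a.e. Your write-up is merely more explicit than the paper's about the precise smallness condition on $\mu(E)$ and about invoking the Claim from the proof of Theorem~\ref{subexp-cr} to identify $\chi_E(X_\ez(r,t,\cdot))$ as the weak solution.
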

\begin{proof}
Since $E$ is an open set, by the a.e. convergence of $X_{\ez_k}(s,t,x)$, it is easy to see that
$$\liminf_{k\to\infty}\chi_E(X_{\ez_k}(s,t,x))\ge \chi_E(X(s,t,x)),\quad a.e.\ x\in\rn.$$
Therefore it follows from Fatou Lemma that
$$\int_\rn \chi_E(X(s,t,x))\,d\mu\le \int_\rn \liminf_{k\to\infty}\chi_E(X_{\ez_k}(s,t,x))\,d\mu\le
 \liminf_{k\to\infty}\int_\rn\chi_E(X_{\ez_k}(s,t,x))\,d\mu .$$
Since $$\|\mathrm{div}_\mu b_\ez\|_{\Exp_\mu(\frac{L}{\log L})}\le
e\|\mathrm{div}_\mu b\|_{\Exp_\mu(\frac{L}{\log L})},$$ by Theorem
\ref{quant} we know that for each $k$, it holds
$$\left|\log\log\log\dfrac{1}{\int_\rn \chi_E(X_{\ez_k}(s,t,x))\,d\mu}- \log\log\log\dfrac{1}{\mu(E)}\right|\le  16e^2\int_s^t\beta(r)\,dr,$$
which together with the last estimate completes the proof.
\end{proof}

\begin{lem}\label{measure-convergence}
Let $X(s,t,x)$ be as in Lemma \ref{mc-flow}. Under the assumptions of
the Main Theorem, for each measurable vector field
$F:\,[s,T]\times\rn\mapsto\rn$, it holds
$$F(t,X_{{\ez_k}}(s,t,x))\to F(t,X(s,t,x)) \  \mbox{in}\ L^0(\mathcal{L}^1\times\mu);$$
and for measurable function $F:\,\rn\mapsto\rn$, it holds for each $t\in [s,T]$ that
$$F(X_{{\ez_k}}(s,t,\cdot))\to F(X(s,t,\cdot)) \  \mbox{in}\ L^0(\mu).$$
\end{lem}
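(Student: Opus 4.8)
The plan is to regard this as a stability statement for composition with maps converging in measure; the one genuinely delicate point is that $F$ is only measurable, and the device that handles it is the uniform integrability of the Jacobians $K_{s,t,\ez_k}$ supplied by Theorem~\ref{subexp-cr}. I will carry out the first assertion; the second is identical with the time variable frozen and $\mathcal{L}^1\times\mu$ replaced by $\mu$. Put $\nu=\mathcal{L}^1\times\mu$, a finite Borel measure on $[s,T]\times\rn$, and write $|\cdot|$ for the Euclidean norm on the target. Fixing $\gz>0$ and $\rho>0$, the goal is to bound $\nu(\{(t,x):|F(t,X_{\ez_k}(s,t,x))-F(t,X(s,t,x))|>\gz\})$ by $\rho$ for all large $k$.

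First I would fix the equi-integrability input. Choose any $\alpha$ with $0<\alpha<\exp\{-16e^2\int_s^T\beta(r)\,dr\}$. As in the proof of Lemma~\ref{density-est}, $\|\mathrm{div}_\mu b_{\ez_k}(r,\cdot)\|_{\Exp_\mu(\frac{L}{\log L})}\le e\,\beta(r)$, so this $\alpha$ is admissible in Theorem~\ref{subexp-cr} applied to the smooth field $b_{\ez_k}$ on every $[s,t]\subset[s,T]$, and the constant in \eqref{est-jac} depends on $b_{\ez_k}$ only through $\int_0^T\|\mathrm{div}_\mu b_{\ez_k}(r,\cdot)\|_{\Exp_\mu(\frac{L}{\log L})}\,dr\le e\int_0^T\beta(r)\,dr$, hence stays bounded uniformly in $k$ and in $t\in[s,T]$. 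Therefore $\sup_k\sup_{t\in[s,T]}\int_\rn\Phi_\alpha(K_{s,t,\ez_k})\,d\mu\le C<\infty$, and since $\Phi_\alpha(r)/r=\exp\{(\log^+r)^\alpha\}\to\infty$, the de~la~Vall\'ee-Poussin criterion yields a nondecreasing $\eta$ with $\eta(0^+)=0$ such that $\int_S K_{s,t,\ez_k}\,d\mu\le\eta(\mu(S))$ for every measurable $S\subset\rn$, uniformly in $k$ and $t$; recall also $\|K_{s,t,\ez_k}\|_{L^1(\mu)}=1$. Finally, by Lusin's theorem and Tietze extension, for each $\dz>0$ I would select an open set $A_\dz\subset[s,T]\times\rn$ with $\nu(A_\dz)<\dz$ and a continuous $G_\dz:[s,T]\times\rn\to\rn$ that agrees with $F$ on $[s,T]\times\rn\setminus A_\dz$.

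The main step, and the one I expect to be the real obstacle, is to control $\nu(\{(t,x):(t,X_{\ez_k}(s,t,x))\in A_\dz\})$ uniformly in $k$. For this I would slice in $t$: with $A_t$ the $t$-section of $A_\dz$ and using $X_{\ez_k}(s,t,\cdot)_{\#}\mu=K_{s,t,\ez_k}\,d\mu$,
$$\nu\bigl(\{(t,x):(t,X_{\ez_k}(s,t,x))\in A_\dz\}\bigr)=\int_s^T\int_{A_t}K_{s,t,\ez_k}(y)\,d\mu(y)\,dt.$$
Splitting $[s,T]=B\cup B^c$ with $B=\{t:\mu(A_t)>\sqrt\dz\}$, from $\int_s^T\mu(A_t)\,dt=\nu(A_\dz)<\dz$ one gets $\mathcal{L}^1(B)<\sqrt\dz$; bounding the inner integral by $\|K_{s,t,\ez_k}\|_{L^1(\mu)}=1$ on $B$ and by $\eta(\sqrt\dz)$ on $B^c$, I obtain the $k$-free bound $\sqrt\dz+(T-s)\,\eta(\sqrt\dz)=:\Theta(\dz)$, with $\Theta(\dz)\to0$ as $\dz\to0$. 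Since $A_\dz$ is open, $\chi_{A_\dz}$ is lower semicontinuous, so Lemma~\ref{mc-flow}(i) and Fatou's lemma give $\nu(\{(t,x):(t,X(s,t,x))\in A_\dz\})\le\Theta(\dz)$ too. I would then fix $\dz$ with $\Theta(\dz)<\rho/3$.

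The rest is routine. By Lemma~\ref{mc-flow}(i), $X_{\ez_k}(s,\cdot,\cdot)\to X(s,\cdot,\cdot)$ $\nu$-a.e., hence, $G_\dz$ being continuous, $G_\dz(t,X_{\ez_k}(s,t,x))\to G_\dz(t,X(s,t,x))$ $\nu$-a.e.\ and therefore in $L^0(\nu)$ since $\nu$ is finite; so for all large $k$ the set $\{(t,x):|G_\dz(t,X_{\ez_k})-G_\dz(t,X)|>\gz\}$ has $\nu$-measure $<\rho/3$. As $F=G_\dz$ off $A_\dz$, the inclusion
$$\{|F(t,X_{\ez_k})-F(t,X)|>\gz\}\subset\{(t,X_{\ez_k})\in A_\dz\}\cup\{(t,X)\in A_\dz\}\cup\{|G_\dz(t,X_{\ez_k})-G_\dz(t,X)|>\gz\}$$
combined with the previous estimates gives $\nu(\{(t,x):|F(t,X_{\ez_k}(s,t,x))-F(t,X(s,t,x))|>\gz\})<\rho$ for all large $k$; since $\gz,\rho$ were arbitrary, the first claim follows. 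For the second assertion I would run the same argument with $t\in[s,T]$ fixed, $\mu$ in place of $\nu$, Lemma~\ref{mc-flow}(ii) for the a.e.\ convergence, and the bound $\int_S K_{s,t,\ez_k}\,d\mu\le\eta(\mu(S))$ used directly instead of the slicing step.
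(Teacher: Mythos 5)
Your proof is correct, and it follows a somewhat different route than the paper's. Both arguments share the two essential ingredients: reduce to a continuous approximant of $F$ via Lusin, and transfer a quantitative quasi-invariance bound from the smooth flows $X_{\ez_k}$ to the limit $X$ via Fatou's lemma and the lower semicontinuity of $\chi_A$ for open $A$. But the devices differ. The paper keeps $F$ uniformly continuous on a large compact set $E_\delta$ and pairs this with Egorov's theorem (uniform convergence of $X_{\ez_k}$ on a large set $\widetilde E_\delta$); you instead invoke Tietze to extend $F|_{E_\delta}$ to a \emph{globally} continuous $G_\delta$, after which mere $\nu$-a.e.\ convergence of $X_{\ez_k}$ suffices to get $G_\delta(t,X_{\ez_k})\to G_\delta(t,X)$ in measure, so no Egorov is needed. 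For the quantitative control of the mass sent by $X_{\ez_k}(s,t,\cdot)$ into the exceptional set, the paper invokes the triple-log stability from Theorem~\ref{quant} and Lemma~\ref{measure-control}; you instead derive a modulus of absolute continuity $\eta$ directly from the uniform $L^{\Phi_\alpha}$ bound on the Jacobians $K_{s,t,\ez_k}$ via de~la~Vall\'ee-Poussin. Your route is more self-contained and slightly more elementary, and it makes the underlying mechanism (uniform integrability of the Jacobians plus Fatou) transparent; the paper's route reuses previously established estimates with no new work. A further genuine contribution of your version is the explicit slicing-in-$t$ argument for the product-measure statement, which the paper simply declares analogous to the frozen-$t$ case. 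One small bookkeeping point to verify if you were to finalize this: the de~la~Vall\'ee-Poussin modulus $\eta$ should be chosen uniformly over $t\in[s,T]$ as well as $k$, which your uniform $\Phi_\alpha$-bound indeed supplies, but it is worth stating explicitly since $\eta$ is then used inside the $t$-integral.
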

\begin{proof} We only prove the second statement, since the first one can be proved in the same way.
By the Egorov Theorem, for each $\delta>0$, there exists a measurable set $E_\delta$  such that
$\mu(\rn\setminus E_\delta)<\delta$ and $F$ is uniformly continuous on $E_\delta$.

On the other hand, by  using the Egorov Theorem again and the fact $X_{{\ez_k}}(s,t,x)$ converges in measure to $X(s,t,x)$, we find that  there exists
$\widetilde E_\delta$ such that $\mu(\rn\setminus \widetilde E_\delta)<\delta$ and
$X_{{\ez_k}}(s,t,x)$ converges uniformly to $X(s,t,x)$ on $\widetilde E_\delta$.

Therefore, for a fixed constant $c$,
\begin{eqnarray*}
&&\mu\left(\left\{x:\, |F(X_{\ez_k}(s,t,x))-F(X(s,t,x))|>c\right\}\right)\\
&&\quad\le
\mu(\rn\setminus\widetilde E_\delta)+\mu(\left\{x:\, X(s,t,x)\in \rn\setminus E_\delta\right\})+\mu(\left\{x:\, X_{\ez_k}(s,t,x)\in \rn\setminus
E_\delta\right\}) \\
&&\quad\quad+\mu\left(\left\{x\in \widetilde E_\delta, X_{\ez_k}(s,t,x),X(s,t,x)\in E_\delta:
\, |F(X_{\ez_k}(s,t,x))-F(X(s,t,x))|>c\right\}\right).
\end{eqnarray*}

Notice that by Theorem \ref{tran-main}, we have that
$$\mu(\left\{x:\, X_{\ez_k}(s,t,x)\in \rn\setminus E_\delta\right\})\le \exp\left\{-\left(\log\frac
1\delta\right)^{\exp\left\{-C\int_s^t\beta(r)\,dr\right\}}\right\}$$
uniformly in $k$, and by Lemma \ref{measure-control}
$$\mu(\left\{x:\, X(s,t,x)\in \rn\setminus E_\delta\right\})\le
\mu\left(\left\{x:\, X(s,t,x)\in \widetilde{\rn\setminus E_\delta}\right\}\right)\le
\exp\left\{-\left(\log\frac 2\delta\right)^{\exp\left\{-C\int_s^t\beta(r)\,dr\right\}}\right\},$$
where $\widetilde{\rn\setminus E_\delta}$ is an open set containing $\rn\setminus E_\delta$
satisfying
$$\mu(\widetilde{\rn\setminus E_\delta})\le 2\mu(\rn\setminus E_\delta).$$

By choosing large enough $k$, we have
$$\mu\left(\left\{x\in \widetilde E_\delta, X_{\ez_k}(s,t,x),X(s,t,x)\in E_\delta:
\, |F(X_{\ez_k}(s,t,x))-F(X(s,t,x))|>c\right\}\right)=0.$$

Therefore, for each $\gz>0$, by choosing sufficiently small $\delta$, we see that there exists $k_\gz$, such that
for each $k>k_\gz$, it holds
\begin{eqnarray*}
&&\mu\left(\left\{x:\, |F(X_{\ez_k}(s,t,x))-F(X(s,t,x))|>c\right\}\right)<\gz,
\end{eqnarray*}
which completes the proof.
\end{proof}

\begin{lem}\label{flow-eq}
Let $X(s,t,x)$ be as in Lemma \ref{mc-flow}. Under the assumptions of
the Main Theorem, for $0\le s\le t\le T$, we have
$$X(s,t,x)=x+\int_{s}^tb(r,X(s,r,x))\,dr$$
for a.e. $x\in\rn$.
\end{lem}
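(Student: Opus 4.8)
The plan is to pass to the limit, along the subsequence $\{\ez_k\}$ of Lemma \ref{mc-flow}, in the exact integral identity
$$X_{\ez_k}(s,t,x)=x+\int_s^t b_{\ez_k}(r,X_{\ez_k}(s,r,x))\,dr,$$
which holds for every $t\in[s,T]$ and every $x$ because $b_{\ez_k}=P_{\ez_k}b\in C^\infty$. By Lemma \ref{mc-flow}(ii) the left side converges to $X(s,t,x)$ for a.e.\ $x$, so everything reduces to identifying $\lim_k\int_s^tb_{\ez_k}(r,X_{\ez_k}(s,r,x))\,dr$. The strategy is to show that the integrands $g_k(r,x):=b_{\ez_k}(r,X_{\ez_k}(s,r,x))$ converge to $g(r,x):=b(r,X(s,r,x))$ in $L^1([s,t]\times\rn,\,dr\,d\mu)$, integrate in $r$, and match limits. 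I would split
$$g_k-g=\underbrace{b_{\ez_k}(r,X_{\ez_k}(s,r,x))-b(r,X_{\ez_k}(s,r,x))}_{=:I_k}+\underbrace{b(r,X_{\ez_k}(s,r,x))-b(r,X(s,r,x))}_{=:II_k}.$$

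First I would establish $g_k\to g$ in $L^0(\mathcal{L}^1\times\mu)$. The term $II_k$ tends to $0$ in $L^0(\mathcal{L}^1\times\mu)$ directly by Lemma \ref{measure-convergence} applied to $F=b$. For $I_k$, the change of variables $X_{\ez_k}(s,r,\cdot)_\#\mu=K_{s,r,\ez_k}\,d\mu$ together with Lemma \ref{duality} gives, for a level $\Lambda>0$,
$$\int_\rn|I_k(r,x)|\,d\mu=\int_\rn|b_{\ez_k}(r,y)-b(r,y)|\,K_{s,r,\ez_k}(y)\,d\mu(y)\le\Lambda\|b_{\ez_k}(r,\cdot)-b(r,\cdot)\|_{L^1(\mu)}+2\|b_{\ez_k}(r,\cdot)-b(r,\cdot)\|_{\Exp_\mu(L)}\bigl\|K_{s,r,\ez_k}\chi_{\{K_{s,r,\ez_k}>\Lambda\}}\bigr\|_{L\log L(\mu)}.$$
Here $b\in L^1(0,T;\Exp_\mu(L))\subset L^1(0,T;L^1(\mu))$ by \eqref{hyp-b-1}, and the $L^1(\mu)$-contractivity (ii) of $P_\ez$ gives $b_{\ez_k}\to b$ in $L^1(0,T;L^1(\mu))$, so the first summand tends to $0$ for a.e.\ $r$ and fixed $\Lambda$; property (iii) of $P_\ez$ bounds $\|b_{\ez_k}(r,\cdot)-b(r,\cdot)\|_{\Exp_\mu(L)}$ by $2\|b(r,\cdot)\|_{\Exp_\mu(L)}$; and Theorem \ref{subexp-cr} applied to $b_{\ez_k}$ (whose hypotheses hold uniformly in $k$, cf.\ Lemma \ref{density-est}) yields $\sup_k\int_\rn\Phi_\alpha(K_{s,r,\ez_k})\,d\mu\le C$ uniformly for $r\in[s,t]$, for small $\alpha>0$; since $\Phi_\alpha$ grows faster than $\tau\log^+\tau$, the Zygmund tail norm tends to $0$ as $\Lambda\to\infty$, uniformly in $k$ and $r$. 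Letting $k\to\infty$ and then $\Lambda\to\infty$ gives $\int_\rn|I_k(r,\cdot)|\,d\mu\to0$ for a.e.\ $r$, and $\int_\rn|I_k(r,\cdot)|\,d\mu\le C\|b(r,\cdot)\|_{\Exp_\mu(L)}\in L^1(s,t)$ is a $k$-uniform majorant (again by Lemma \ref{duality}), so dominated convergence gives $I_k\to0$ in $L^1([s,t]\times\rn)$, hence $g_k\to g$ in $L^0(\mathcal{L}^1\times\mu)$.

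The main step is to upgrade this to $g_k\to g$ in $L^1([s,t]\times\rn,\,dr\,d\mu)$. Since $g\in L^1$ (using $K_{s,r}\in L^{\Phi_\alpha}(\mu)$ from Lemma \ref{density-est}) and $g_k\to g$ in $L^0$, by a standard variant of Scheffé's lemma it suffices to show $\|g_k\|_{L^1}\to\|g\|_{L^1}$. For a.e.\ $r$, $\|g_k(r,\cdot)\|_{L^1(\mu)}=\int_\rn|b_{\ez_k}(r,y)|K_{s,r,\ez_k}(y)\,d\mu(y)$; its discrepancy from $\int_\rn|b(r,y)|K_{s,r,\ez_k}(y)\,d\mu(y)$ is at most $\int_\rn|b_{\ez_k}(r,y)-b(r,y)|K_{s,r,\ez_k}(y)\,d\mu\to0$ (the $I_k$-bound), while $\int_\rn|b(r,y)|K_{s,r,\ez_k}(y)\,d\mu\to\int_\rn|b(r,y)|K_{s,r}(y)\,d\mu=\|g(r,\cdot)\|_{L^1(\mu)}$ because $K_{s,r,\ez_k}\rightharpoonup K_{s,r}$ weakly-$\ast$ in $L^{\Phi_\alpha}(\mu)$ along the subsequence of Lemma \ref{mc-flow} (cf.\ Lemma \ref{density-est}, the limit being uniquely identified as the density of $X(s,r,\cdot)_\#\mu$), and $|b(r,\cdot)|$ belongs to the predual $E^{\bar\Phi_\alpha}(\mu)$ of $L^{\Phi_\alpha}(\mu)$ — indeed $\int_\rn\bar\Phi_\alpha(|b(r,x)|/\lambda)\,d\mu<\infty$ for every $\lambda>0$, since by \eqref{hyp-b-1} $|b(r,\cdot)|\lesssim(1+|\cdot|\log^+|\cdot|)$ and the Gaussian decay of $\mu$ dominates the subexponential growth of the Young conjugate $\bar\Phi_\alpha$ of $\Phi_\alpha$. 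As $\int_\rn|b(r,y)|K_{s,r,\ez_k}(y)\,d\mu\le2\|b(r,\cdot)\|_{L^{\bar\Phi_\alpha}(\mu)}\|K_{s,r,\ez_k}\|_{L^{\Phi_\alpha}(\mu)}\le C\|b(r,\cdot)/(1+|\cdot|\log^+|\cdot|)\|_{L^\infty}\in L^1(s,t)$, dominated convergence in $r$ gives $\|g_k\|_{L^1([s,t]\times\rn)}\to\|g\|_{L^1([s,t]\times\rn)}$, hence $g_k\to g$ in $L^1([s,t]\times\rn,\,dr\,d\mu)$. Integrating in $r$, $\int_s^tb_{\ez_k}(r,X_{\ez_k}(s,r,\cdot))\,dr\to\int_s^tb(r,X(s,r,\cdot))\,dr$ in $L^1(\rn,d\mu)$, so a.e.\ along a further subsequence; comparing with $\int_s^tb_{\ez_k}(r,X_{\ez_k}(s,r,x))\,dr=X_{\ez_k}(s,t,x)-x\to X(s,t,x)-x$, uniqueness of a.e.\ limits gives $X(s,t,x)=x+\int_s^tb(r,X(s,r,x))\,dr$ for a.e.\ $x$ (and by Fubini $r\mapsto b(r,X(s,r,x))\in L^1(s,t)$ for a.e.\ $x$).

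I expect the delicate point to be the norm convergence $\|g_k\|_{L^1}\to\|g\|_{L^1}$, i.e.\ passing to the limit in $\int_\rn|b_{\ez_k}(r,y)|K_{s,r,\ez_k}(y)\,d\mu$: the mollification $b_{\ez_k}\to b$ converges only in $L^1$, whereas the densities $K_{s,r,\ez_k}$ carry only the weak-$\ast$ compactness of $L^{\Phi_\alpha}(\mu)$, barely above $L^1$. This is exactly where the quantitative density estimate of Theorem \ref{subexp-cr} is indispensable — it places $K_{s,r,\ez_k}$ uniformly in $L^{\Phi_\alpha}(\mu)\subsetneq L\log L(\mu)$, rather than leaving only the trivial bound $\|K_{s,r,\ez_k}\|_{L^1(\mu)}=1$.
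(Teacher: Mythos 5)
Your proposal is correct, and it reaches the paper's conclusion by a partly different route. The paper proves exactly the same $L^1(dr\,d\mu)$ convergence of $b_{\ez_k}(r,X_{\ez_k}(s,r,x))$ to $b(r,X(s,r,x))$ and uses the same splitting into $I$ and $II$; for $I$ it simply invokes the duality bound $2\|b_{\ez_k}-b\|_{\Exp_\mu(L)}\|K_{s,r,\ez_k}\|_{L\log L(\mu)}\to 0$, whereas you truncate the density at level $\Lambda$ and combine the strong $L^1(\mu)$-continuity of the Ornstein--Uhlenbeck semigroup with equi-integrability of $K_{s,r,\ez_k}$ in $L\log L(\mu)$; this is more robust, since norm convergence of $P_{\ez_k}b\to b$ in the non-$\Delta_2$ space $\Exp_\mu(L)$ is exactly the point the paper leaves implicit. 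The real divergence is in $II$: the paper truncates the vector field ($b_M$), proves $b(r,X(s,r,\cdot))\in L^1$ by Fatou plus the duality $L\log L\log\log L(\mu)$--$\Exp_\mu(\frac{L}{\log L})$, and finishes with dominated convergence for the truncated piece; you instead upgrade the $L^0$ convergence supplied by Lemma \ref{measure-convergence} to $L^1$ convergence via the Riesz--Scheff\'e lemma, feeding it the norm convergence $\int|b(r,\cdot)|K_{s,r,\ez_k}\,d\mu\to\int|b(r,\cdot)|K_{s,r}\,d\mu$, which you get from the weak-$\ast$ convergence of the densities in $L^{\Phi_\alpha}(\mu)$ paired against $|b(r,\cdot)|$, shown to lie in the Orlicz heart $E^{\bar\Phi_\alpha}(\mu)$ thanks to \eqref{hyp-b-1} and the Gaussian decay. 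What each buys: the paper's truncation argument is more elementary and never leaves the two duality pairings of Lemma \ref{duality}; your argument needs a little Orlicz duality theory (heart/predual, and the observation --- which you correctly flag --- that the weak limit in Lemma \ref{density-est} is uniquely identified as the pushforward density, so the convergence holds along the whole subsequence of Lemma \ref{mc-flow} for a.e.\ $r$), but in exchange it isolates cleanly where the quantitative estimate of Theorem \ref{subexp-cr} enters, namely in providing integrability of $K_{s,r,\ez_k}$ strictly above $L^1$. Both routes are sound; yours is a legitimate alternative proof.
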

\begin{proof}
It suffices to prove that for each $s\in [0,T)$,
$$\int_\rn\int_{s}^T|b_{\ez_k}(r,X_{\ez_k}(s,r,x))-b(r,X(s,r,x))|\,dr\,d\mu\to 0\ \mbox{as}\  \ \ez_k\to 0.$$
Write
\begin{eqnarray*}
&&\int_\rn\int_{s}^T|b_{\ez_k}(r,X_{\ez_k}(s,r,x))-b(r,X(s,r,x))|\,dr\,d\mu\\
&&\quad\le
\int_\rn\int_{s}^T|b_{\ez_k}(r,X_{\ez_k}(s,r,x))-b(r,X_{\ez_k}(s,r,x))|\,dr\,d\mu\\
&&\quad\quad+
\int_\rn\int_{s}^T|b(r,X_{\ez_k}(s,r,x))-b(r,X(s,r,x))|\,dr\,d\mu=:I+II.
\end{eqnarray*}
By Theorem \ref{subexp-cr} and Lemma \ref{duality}, we see that
\begin{eqnarray*}
I&&\le \int_\rn\int_{s}^T|b_{\ez_k}(r,x)-b(r,x)|K_{s,r,\ez_k}(x)\,dr\,d\mu\\
&&\le \int_{s}^T
2\|b_{\ez_k}(r,\cdot)-b(r,\cdot)\|_{\Exp_\mu(L)}\|K_{s,r,\ez_k}\|_{L\log
L(\mu) }\,dr\to 0,\ \mbox{as}\ \  k\to\infty.
\end{eqnarray*}

On the other hand, by applying Lemma \ref{measure-convergence}, we find that
$$b(r,X_{\ez_k}(s,r,x))\to b(r,X(s,r,x))$$
a.e. in $(s,T)\times \rn$. Let $b_M:=\min\{\max\{b,-M\},M\}$. Notice that
\begin{eqnarray*}
\int_\rn\int_{s}^T|b_M(r,X(s,r,x))|\,dr\,d\mu
&&\le \int_\rn\int_{s}^T|b_M(r,X_{\ez_k}(s,r,x))-b_M(r,X(s,r,x))|\,dr\,d\mu\\
&&+ \int_\rn\int_{s}^T|b_M(r,X_{\ez_k}(s,r,x))|\,dr\,d\mu.
\end{eqnarray*}
By using the fact $X_{\ez_k}(s,t,x)$ converges to $X(s,t,x)$ a.e. on $[s,T]\times \rn$, we apply the dominated convergence theorem
and Theorem \ref{subexp-cr} to conclude that
\begin{eqnarray*}
\int_\rn\int_{s}^T|b_M(r,X(s,r,x))|\,dr\,d\mu&&\le \liminf_{k\to\infty}\int_\rn\int_{s}^T|b_M(r,X_{\ez_k}(s,r,x))|\,dr\,d\mu\\
&&\le  \liminf_{k\to\infty}\int_\rn\int_{s}^T|b(r,x)|K_{s,r,\ez_k}(x)\,dr\,d\mu\\
&&\le \liminf_{k\to\infty}\int_{s}^T\|b(r,\cdot)\|_{\Exp_\mu(\frac{L}{\log L})}\|K_{s,r,\ez_k}\|_{L\log L\log\log L(\mu)}\,dr\\
&&\le C(b)<\infty,
\end{eqnarray*}
where in the last second inequality we used Lemma \ref{duality} and the fact  $ L^{\Phi_\alpha}(\mu)\subset L\log L\log\log L(\mu)$
for any $\alpha>0$.  We therefore see that $b(r,X(s,r,x))\in L^1(s,T;\mu)$, and
\begin{eqnarray*}
II
&&\le \int_\rn\int_{s}^T|b(r,X_{\ez_k}(s,r,x))-b_M(r,X_{\ez_k}(s,r,x))|\,dr\,d\mu\\
&&+\int_\rn\int_{s}^T|b(r,X(s,r,x))-b_M(r,X(s,r,x))|\,dr\,d\mu\\
&&+ \int_\rn\int_{s}^T\left(|b_M(r,X_{\ez_k}(s,r,x))-b_M(r,X(s,r,x))|\right)\,dr\,d\mu\\
&&=:II_1+II_2+II_3.
\end{eqnarray*}
For each $\gz>0$, we can choose $M$ sufficient large such that $II_1+II_2<\gz/2$.
Applying the dominated convergence theorem to $II_3$, we see that
$$II_3\to 0\hspace{.5cm}\text{as }k\to \infty.$$
Hence, we obtain that
\begin{eqnarray*}
\lim_{k\to\infty}\int_\rn\int_{s}^T|b(r,X_{\ez_k}(s,r,x))-b(r,X(s,r,x))|\,dr\,d\mu=0,
\end{eqnarray*}
which together with the fact $X_{\ez_k}(s,t,x)\to X(s,t,x)$ a.e., implies that
$$X(s,t,x)=x+\int_{s}^tb(r,X(s,r,x))\,dr,\ \mu-a.e.$$
The proof is completed.
\end{proof}

Applying the above results of this section to the backward flow instead of the forward flow,
we can conclude that under the assumptions of the Main Theorem,
there exists a Borel map $\tilde X(s,t,x)$ arising as a limit of a
sequence of smooth flows $\tilde X_{\ez_k}(s,t,x)$, given as
\begin{equation}\label{inverse-flow-app}
\tilde X_{\ez_k}(s,t,x)=x-\int_{s}^tb_{\ez_k}(r,\tilde X_{\ez_k}(r,t,x))\,dr,
\end{equation}
such that  for each $s\in [0,t]$, it holds
\begin{equation}\label{inverse-flow}
\tilde X(s,t,x)=x-\int_{s}^tb(r,\tilde X(r,t,x))\,dr, \ a.e.\
x\in\rn.
\end{equation}

Then by using the fact that for $0\le s\le t\le T$ and each $\ez_k$,
$$X_{\ez_k}(s,t,\tilde X_{{\ez_k}}(s,t,x))=\tilde X_{\ez_k}(s,t,X_{{\ez_k}}(s,t,x))=x$$
(see \cite[Theorem  2.1]{cc05}), Lemma \ref{mc-flow} and Lemma \ref{measure-convergence}, we can
conclude that for $0\le s\le t\le T$, it holds
\begin{equation}\label{identity-flow}
X(s,t,\tilde X(s,t,x))=\tilde X(s,t,X(s,t,x))=x,
 \ a.e. \, x\in\rn.
\end{equation}

\begin{lem}\label{rep-density}
Let $X(s,t,x)$ be as in Lemma \ref{mc-flow} and $\tilde X(s,t,x)$ be
given as in \eqref{inverse-flow}.  Under the assumptions of the Main
Theorem,  for  $0\le s\le t\le T$, the density functions
$K_{s,t}=\frac{d}{d\mu} (X(s,t)_{\#}d\mu)$ and $\tilde K_{s,t}=\frac{d}{d\mu}
(\tilde X(s,t)_{\#}d\mu)$ satisfy
$$K_{s,t}(x)=\exp\left\{\int_s^t -\mathrm{div}_\mu\,b(r,\tilde X(r,t,x))\,dr\right\} \quad \ a.e. \ x\in \rn,$$
and
$$\tilde K_{s,t}(x)=\exp\left\{\int_s^t \mathrm{div}_\mu\,b(r, X(s,r,x))\,dr\right\} \quad \ a.e. \ x\in \rn.$$
\end{lem}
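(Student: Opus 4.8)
The plan is to pass to the limit in the explicit Jacobian identities available for the smooth approximations $b_{\ez_k}=P_{\ez_k}b$. Since $b_{\ez_k}(r,\cdot)\in C^\infty(\rn)$ for every $r$, formula \eqref{density-funct-smooth} (and its backward counterpart, see \cite{cc05,af09}) applies to $b_{\ez_k}$ and yields, for $0\le s\le t\le T$,
$$K_{s,t,\ez_k}(x)=\exp\Big\{-\int_s^t \mathrm{div}_\mu b_{\ez_k}(r,\tilde X_{\ez_k}(r,t,x))\,dr\Big\},\qquad \tilde K_{s,t,\ez_k}(x)=\exp\Big\{\int_s^t \mathrm{div}_\mu b_{\ez_k}(r,X_{\ez_k}(s,r,x))\,dr\Big\},$$
where $K_{s,t,\ez_k}$ and $\tilde K_{s,t,\ez_k}$ denote the densities of $X_{\ez_k}(s,t,\cdot)_\#d\mu$ and $\tilde X_{\ez_k}(s,t,\cdot)_\#d\mu$. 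I will carry out the argument for the first identity; the second is entirely analogous, with the forward flow $X_{\ez_k}$, the densities $K_{s,r,\ez_k}$ and Lemma \ref{measure-convergence} replacing the backward objects. The goal is first to show that, along a subsequence, the exponent converges almost everywhere to $-\int_s^t\mathrm{div}_\mu b(r,\tilde X(r,t,x))\,dr$, and then to identify the resulting a.e.\ limit of $K_{s,t,\ez_k}$ with $K_{s,t}$ by means of the weak convergence $K_{s,t,\ez_k}\rightharpoonup K_{s,t}$ in $L^{\Phi_\alpha}(\mu)$ established in Lemma \ref{density-est}.

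For the convergence of the exponent I split
$$\int_s^t \mathrm{div}_\mu b_{\ez_k}(r,\tilde X_{\ez_k}(r,t,x))\,dr-\int_s^t \mathrm{div}_\mu b(r,\tilde X(r,t,x))\,dr=I_k(x)+II_k(x),$$
where $I_k$ isolates the change of integrand and $II_k$ the change of the evaluation point. The term $I_k$ is treated exactly as the term $I$ in the proof of Lemma \ref{flow-eq}: integrating over $\mu$, changing variables by $\tilde X_{\ez_k}(r,t,\cdot)$ (which pushes $\mu$ forward to $\tilde K_{r,t,\ez_k}\,d\mu$), and applying the Orlicz duality of Lemma \ref{duality}(i) gives
$$\|I_k\|_{L^1(\mu)}\le 2\int_s^t\big\|\mathrm{div}_\mu b_{\ez_k}(r,\cdot)-\mathrm{div}_\mu b(r,\cdot)\big\|_{\Exp_\mu(\frac{L}{\log L})}\,\big\|\tilde K_{r,t,\ez_k}\big\|_{L\log L\log\log L(\mu)}\,dr.$$
The densities $\tilde K_{r,t,\ez_k}$ are bounded in $L^{\Phi_\alpha}(\mu)$ uniformly in $r$ and $k$ (by the backward analogue of Theorem \ref{subexp-cr}, together with $\|\mathrm{div}_\mu b_{\ez_k}\|_{\Exp_\mu(\frac{L}{\log L})}\le e\|\mathrm{div}_\mu b\|_{\Exp_\mu(\frac{L}{\log L})}$ for $\ez_k<1$), hence bounded and equi-integrable in the Zygmund space $L\log L\log\log L(\mu)$, since $\Phi_\alpha(t)$ grows super-linearly relative to $t\,\log^+t\,\log^+\log^+t$. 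Since $\mathrm{div}_\mu b_{\ez_k}(r,\cdot)=e^{\ez_k}P_{\ez_k}(\mathrm{div}_\mu b(r,\cdot))$ converges to $\mathrm{div}_\mu b(r,\cdot)$ in $L^1(\mu)$ while staying bounded in $\Exp_\mu(\frac{L}{\log L})$, splitting the inner integral over $\{\tilde K_{r,t,\ez_k}\le N\}$ and its complement shows it tends to $0$ for a.e.\ $r$; a dominated convergence in $r$ (the integrand being controlled by $C\|\mathrm{div}_\mu b(r,\cdot)\|_{\Exp_\mu(\frac{L}{\log L})}\in L^1(0,T)$) then gives $I_k\to 0$ in $L^1(\mu)$. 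For $II_k$, the backward analogue of Lemma \ref{measure-convergence} applied to $F=\mathrm{div}_\mu b$ gives $\mathrm{div}_\mu b(r,\tilde X_{\ez_k}(r,t,x))\to\mathrm{div}_\mu b(r,\tilde X(r,t,x))$ in $L^0(\mathcal L^1\times\mu)$; truncating $\mathrm{div}_\mu b$ at level $M$ — the bounded part handled by dominated convergence from the a.e.\ limit, the tail handled uniformly in $k$ by the same change of variables, duality and equi-integrability as above, since $\mu(\{|\mathrm{div}_\mu b(r,\cdot)|>M\})\to0$ — yields $II_k\to 0$ in $L^0(\mu)$. Hence, along a further subsequence, $K_{s,t,\ez_k}\to\exp\{-\int_s^t\mathrm{div}_\mu b(r,\tilde X(r,t,x))\,dr\}$ almost everywhere.

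To identify the limit, observe that the uniform $L^{\Phi_\alpha}(\mu)$ bound on $K_{s,t,\ez_k}$ makes this family equi-integrable, so the a.e.\ convergence just obtained upgrades, by Vitali's theorem, to convergence in $L^1(\mu)$; in particular, for every compactly supported continuous function $\psi$,
$$\int_\rn\psi(x)K_{s,t,\ez_k}(x)\,d\mu(x)\to\int_\rn\psi(x)\exp\Big\{-\int_s^t\mathrm{div}_\mu b(r,\tilde X(r,t,x))\,dr\Big\}\,d\mu(x).$$
On the other hand, the proof of Lemma \ref{density-est} gives $\int_\rn\psi K_{s,t,\ez_k}\,d\mu\to\int_\rn\psi K_{s,t}\,d\mu$. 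Comparing the two limits over all such $\psi$ forces $K_{s,t}(x)=\exp\{-\int_s^t\mathrm{div}_\mu b(r,\tilde X(r,t,x))\,dr\}$ for a.e.\ $x\in\rn$. Running the same argument with the second smooth identity, the forward flow $X_{\ez_k}$, the densities $K_{s,r,\ez_k}$ (bounded in $L^{\Phi_\alpha}(\mu)$ by Theorem \ref{subexp-cr} directly) and Lemma \ref{measure-convergence} produces the companion formula for $\tilde K_{s,t}$.

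The step I expect to be the main obstacle is the term $I_k$: the integrand $\mathrm{div}_\mu b_{\ez_k}$ varies with $k$ and need not converge to $\mathrm{div}_\mu b$ in the (non-separable) norm of $\Exp_\mu(\frac{L}{\log L})$, so composition cannot be controlled by norm continuity. The point, just as for the term $I$ in Lemma \ref{flow-eq}, is that only the $L^1(\mu)$-convergence of $\mathrm{div}_\mu b_{\ez_k}$ — which is automatic — is needed once it is paired, via change of variables, with the densities $\tilde K_{r,t,\ez_k}$, whose uniform $L^{\Phi_\alpha}(\mu)$ bounds (Theorem \ref{subexp-cr} and its backward analogue) supply exactly the equi-integrability required to absorb their large values.
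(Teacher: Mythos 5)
Your proof is correct and follows essentially the same route as the paper: pass to the limit in the smooth Jacobian formula, decompose the exponent difference into a change-of-integrand term and a change-of-evaluation-point term handled by the argument of Lemma \ref{flow-eq} together with Lemma \ref{measure-convergence}, and identify the a.e.\ limit with $K_{s,t}$ via the weak $L^{\Phi_\alpha}(\mu)$ convergence from Lemma \ref{density-est}. The Vitali/equi-integrability step you spell out at the end is implicit in the paper's terser phrasing but is exactly the right justification.
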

\begin{proof} We only give the proof for $K_{s,t}$ since the proof of $\tilde
K_{s,t}$ is the same. Notice that as we recalled in
\eqref{density-funct-smooth}, it holds for each $\ez_k$ that
$$K_{s,t,\ez_k}(x)=\exp\left\{\int_s^t -\mathrm{div}_\mu\,b_{\ez_k}(r,\tilde X_{\ez_k}(r,t,x))\,dr\right\},$$
where $\tilde X_{\ez_k}(r,t,x)$ is as in \eqref{inverse-flow-app}.
By Lemma \ref{measure-convergence},  we see that for  $0\le s\le t\le T$,
$$\mathrm{div}_\mu\, b(s,\tilde X_{\ez_k}(s,t,x))\to \mathrm{div}_\mu\, b(s,\tilde X(s,t,x))$$
in measure and a.e. $x\in \rn$ up to a subsequence of $\{\ez_k\}$.

Since $\mathrm{div}_\mu\, b(r,\tilde X_{\ez_k}(r,t,x))$ and
$\mathrm{div}_\mu\, b(r,\tilde X(r,t,x))$ are uniformly integrable in
$L^1(\mu)$, by the same argument in the proof of Lemma \ref{flow-eq}
we can further conclude that
$$\mathrm{div}_\mu\, b_{\ez_k}(r,\tilde X_{\ez_k}(r,t,x))\to \mathrm{div}_\mu\,  b(r,\tilde X(r,t,x))$$
in measure and a.e. $x\in \rn$ up to a subsequence of $\{\ez_k\}$.

From this together with the fact $ K_{s,t,\ez_k}(x)\rightharpoonup
K_{s,t}(x) \  \mathrm{in} \ L^{\Phi_\alpha}(\mu)$ from Lemma
\ref{density-est}, we can conclude that for $0\le s\le t\le T$ it
holds
$$K_{s,t}(x)=\exp\left\{\int_s^t -\mathrm{div}_\mu\,b(r,\tilde X(r,t,x))\,dr\right\} \quad \ a.e. \ x\in \rn,$$
as desired.
\end{proof}

Uniqueness of the flow will follow as a corollary of Theorem \ref{tran-main}.
\begin{prop}\label{uniq-flow}
  Under the assumptions of the Main Theorem,  the flows $X(s,t,x)$, $\tilde X(s,t,x)$ satisfying the
  properties from part (a) of the Main Theorem
  are unique.
  \end{prop}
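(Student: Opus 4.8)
The plan is to deduce uniqueness from the well-posedness of the transport equation (Theorem~\ref{tran-main}), in the spirit of DiPerna--Lions. Let $X$ be the flow constructed in Lemmas~\ref{mc-flow}--\ref{rep-density}, and let $(Y,\tilde Y)$ be any other pair consisting of a forward flow and a backward flow associated to $b$ and satisfying (i) and (ii) of part~(a); we will show $Y=X$ and $\tilde Y=\tilde X$ almost everywhere. Fix $0<t_0\le T$, an index $i\in\{1,\dots,n\}$, and a bounded, continuous, injective function $\beta:\rr\to\rr$, say $\beta=\arctan$. The first step is to show that both $u(s,x):=\beta(X^i(s,t_0,x))$ and $v(s,x):=\beta(Y^i(s,t_0,x))$ are weak solutions, in $L^\infty(0,t_0;L^\infty)$, of the backward transport problem $\partial_s w+b\cdot\nabla w=0$ on $(0,t_0)\times\rn$ with final datum $w(t_0,\cdot)=\beta(x_i)$. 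For $u$ this is essentially contained in the proof of Lemma~\ref{mc-flow}: each $\beta(X^i_{\ez_k}(s,t_0,\cdot))$ is a bounded solution of the corresponding problem for $b_{\ez_k}$, these converge weakly-$\ast$ in $L^\infty(0,t_0;L^\infty)$ to $u$, and by the renormalization property of \cite{dl89,a04} the limit solves the problem for $b$. For $v$ one repeats, almost verbatim, the computation verifying the Claim in the proof of Theorem~\ref{subexp-cr}. There the only use of the $C^2$-regularity of $b$ is through the density identity~\eqref{density-funct-smooth}; but by hypothesis $Y$ satisfies exactly this identity, $K^Y_{s,t_0}(x)=\exp\{\int_s^{t_0}-\div_\mu b(r,\tilde Y(r,t_0,x))\,dr\}$, where $K^Y_{s,t_0}$ is the density of $Y(s,t_0,\cdot)_{\#}\mu$, whence $\partial_s K^Y_{s,t_0}(x)=K^Y_{s,t_0}(x)\,\div_\mu b(s,\tilde Y(s,t_0,x))$ for a.e.\ $x$, and this is precisely what makes the integration by parts in $s$ produce the term $w\,\div_\mu b$ of the weak formulation. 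The accompanying change of variables $y=\tilde Y(s,t_0,x)$ uses only the inverse relation (i) together with the absolute continuity (ii), and the relations $\tilde Y(t_0,t_0,x)=x$, $K^Y_{t_0,t_0}=1$, $\partial_s\tilde Y(s,t_0,x)=b(s,\tilde Y(s,t_0,x))$ come from the definition of a backward flow.

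Granted that $u$ and $v$ solve the same backward Cauchy problem, Theorem~\ref{tran-main}---applied after the time reversal $s\mapsto t_0-s$, under which \eqref{hyp-b-1} and \eqref{hyp-b-2} are preserved---forces $u=v$ a.e., that is, $\beta(X^i(s,t_0,x))=\beta(Y^i(s,t_0,x))$ for a.e.\ $(s,x)\in(0,t_0)\times\rn$. Since $\beta$ is injective, $X^i(s,t_0,x)=Y^i(s,t_0,x)$ a.e., and letting $i$ range over $\{1,\dots,n\}$ gives $X(s,t_0,x)=Y(s,t_0,x)$ for a.e.\ $(s,x)\in(0,t_0)\times\rn$. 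As $t_0\in(0,T]$ is arbitrary and all maps are Borel, Fubini's theorem yields $X=Y$ a.e.\ on $\{0\le s\le t\le T\}\times\rn$. Finally $\tilde Y=\tilde X$ a.e.\ as well: by (i) and (ii), both $\tilde Y(s,t,\cdot)$ and $\tilde X(s,t,\cdot)$ are a.e.\ inverse to $X(s,t,\cdot)=Y(s,t,\cdot)$, so applying $\tilde X(s,t,\cdot)$ to the identity $X(s,t,\tilde Y(s,t,x))=x=X(s,t,\tilde X(s,t,x))$ and using (i) once more gives $\tilde Y(s,t,x)=\tilde X(s,t,x)$ for a.e.\ $x$.

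The main obstacle is the first step: verifying that a flow satisfying only the quasi-invariance and density properties of part~(a)---whose density is merely in $L^{\Phi_\alpha}(\mu)$, barely beyond $L^1$---induces an honest distributional solution of the transport equation, so that Theorem~\ref{tran-main} may be invoked. Every integral occurring in the weak formulation and in the change-of-variables/integration-by-parts computation must be shown to converge absolutely, and this is exactly where Lemma~\ref{duality} is used: one pairs $\div_\mu b\in L^1(0,T;\Exp_\mu(\frac{L}{\log L}))$, as well as $b\in L^1(0,T;\Exp_\mu(L))$ (which follows from \eqref{hyp-b-1}, cf.\ the proof of Lemma~\ref{molifier}), against $K^Y_{s,r}\in L^{\Phi_\alpha}(\mu)\subset L\log L\log\log L(\mu)$, just as in the proofs of Lemmas~\ref{mc-flow} and \ref{flow-eq}. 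Once this integrability is in place, the computation is formally identical to the one in the proof of Theorem~\ref{subexp-cr}, and the reduction to Theorem~\ref{tran-main} is complete.
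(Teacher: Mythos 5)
Your proof is correct and follows essentially the same route as the paper: both hinge on showing that if a flow satisfies the properties of part (a) — in particular the density identity $K_{s,t}=\exp\{\int_s^t-\div_\mu b(r,\tilde X(r,t,x))\,dr\}$ assumed in (a)(ii), which gives absolute continuity of $s\mapsto K_{s,t}(x)$ — then pushing forward a bounded test datum through it produces a weak solution of the backward transport problem, so that Theorem~\ref{tran-main} (after time reversal) forces all such flows to agree. The small cosmetic differences: you use the injective datum $\beta=\arctan$ to pass cleanly from equality of solutions to equality of flows (the paper leaves this last step implicit with $u_0\in C_c^\infty$), and you treat the constructed flow $X$ asymmetrically via the approximation argument of Lemma~\ref{mc-flow} when in fact the density-identity computation you run for $Y$ applies verbatim to $X$, which is how the paper handles both at once.
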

  \begin{proof} Once more we only give the proof for $X(s,t,x)$ since the proof of $\tilde
X(s,t,x)$ is the same. By the well-posedness of the transport equation
(Theorem \ref{tran-main}), it suffices to show that for each $u_0\in
C^\infty_c(\rn)$, $u(s,x):=u_0(X(s,t,x))$ is a distributional solution
to the transport
  equation
  $$\dfrac{\partial}{\partial s}u(s,x)+b(s,x)\cdot\nabla u(s,x)=0$$
  on $(0,t)\times \rn$ with the final value $u(t)=u_0$.
That is, for each  $\varphi\in C^\infty((0,t]\times\rn)$ with compact support in $(0,t]\times \rn$, it holds
\begin{eqnarray*}
&&-\int_0^t\int_{\rn}u(s,x)\,\frac{\partial \varphi(s,x)}{\partial
s}\,ds\,d\mu(x)+ \int_{\rn}u(t,x)\, \varphi(t,x)\,d\mu(x)\\
&&\quad=\int_0^t\int_{\rn}\left[u(s,x)\vz\,\mathrm{div}_\mu(
b)(s,x)+u(s,x)b(s,x)\cdot\nabla \vz\right]\,d\mu(x)\,ds.
\end{eqnarray*}

From the fact $$K_{s,t}(x)=\frac{d}{d\mu}
(X(s,t, {x})_{\#}d\mu)=\exp\left\{\int_s^t -\mathrm{div}_\mu\,b(r,\tilde
X(r,t,x))\,dr\right\} \quad \ a.e. \ x\in \rn,$$ we know that for a.e.
$x\in \rn$ the density function $K_{s,t}(x)$ is absolutely continuous on
$[0,t]$. Using this, change of variables, integration by parts and
the fact
$$X(s,t,\tilde X(s,t,x))=\tilde X(s,t,X(s,t,x))=x,  \ a.e. \,
x\in\rn,$$
we obtain that
\begin{eqnarray*}
&&\int_0^t\int_{\rr^n} u\,\dfrac{\partial \varphi}{\partial s}\,d\mu\,ds=\int_0^t\int_{\rr^n} u_0(X(s,t,x))\,\dfrac{\partial \varphi(s,x)}{\partial
s}\,d\mu\,ds\\
&&\hs=\int_0^t\int_{\rr^n} u_0(y)\,\dfrac{\partial
\varphi(s,z)}{\partial s}\bigg|_{z=\tilde X(s,t,y)}K_{s,t}(y)\,d\mu\,ds\\
&&\hs=\int_0^t\int_{\rr^n} u_0(y)\left[\,\dfrac{\partial
\varphi(s,\tilde X(s,t,y))}{\partial s}-\nabla \varphi(s,\tilde
X(s,t,y))\cdot
b(s,\tilde X(s,t,y))\right]K_{s,t}(y)\,d\mu\,ds\\
&&\hs=\int_{\rr^n} u_0 \,\varphi(t,\cdot)\,d\mu-\int_0^t\int_{\rr^n} u_0(y) \varphi(s,
\tilde X(s,t,y))K_{s,t}(y)\mathrm{div}_\mu\,b(s,\tilde X(s,t,y))\,\,d\mu\,ds\\
&&\hs\hs   -\int_0^t\int_{\rr^n} u_0(y)\nabla \varphi(s,\tilde X(s,t,y))\cdot b(s,\tilde X(s,t,y))K_{s,t}(y)\,d\mu\,ds\\
&&\hs=\int_{\rr^n} u_0
\,\varphi(t,\cdot)\,d\mu-\int_0^t\int_{\rr^n} u_0(X(s,t,x))
\left[\varphi(s,x)\mathrm{div}_\mu\,b(s,x)+\nabla \varphi(s,x)\cdot
b(s,x)\right]\,\,d\mu\,ds.
\end{eqnarray*}
This implies $u_0(X(s,t,x))$ is a distributional solution to the
transport equation, as desired.
  \end{proof}

\begin{proof}[Proof of Main Theorem (a)]
The existence of  a forward flow $X$ and a backward flow $\tilde X$ follows
from Lemma \ref{flow-eq}. Property (i) follows from
\eqref{identity-flow}. The estimate of the density function follows
from Lemma \ref{density-est} and Lemma \ref{rep-density}. The
uniqueness follows from Proposition \ref{uniq-flow}.
\end{proof}

\section{Regularity, semigroup structure and stability}

\noindent
In this section, we prove part (b) of the Main Theorem, and  give a stability result. To do this, we start by stating the semigroup structure of our flow.

\begin{lem}\label{semigroup}
Let $b$ be as in the Main Theorem, and let $X$ and $\tilde X$ be the
forward and backward flows associated to $b$, respectively, that satisfy
properties of part (a) of the Main Theorem. Then $X$ and $\tilde X$
have the semigroup property.
\end{lem}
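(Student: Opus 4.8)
The plan is to obtain the semigroup property for $X$ and $\tilde X$ by passing to the limit in the corresponding identity for the smooth approximations. Since each $b_{\ez_k}=P_{\ez_k}b$ is smooth and satisfies the growth bound of Lemma \ref{molifier}, the map $X_{\ez_k}$ is a genuine classical flow, so by uniqueness for the ODE it has the exact semigroup structure
$$X_{\ez_k}(s,t,X_{\ez_k}(r,s,x))=X_{\ez_k}(r,t,x)\qquad\text{for all }x\in\rn,\ 0\le r\le s\le t\le T,$$
and similarly $\tilde X_{\ez_k}(r,s,\tilde X_{\ez_k}(s,t,x))=\tilde X_{\ez_k}(r,t,x)$. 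Fix $0\le r\le s\le t\le T$. By Lemma \ref{mc-flow}, along $\{\ez_k\}$ each of $X_{\ez_k}(r,s,\cdot),X_{\ez_k}(s,t,\cdot),X_{\ez_k}(r,t,\cdot)$ converges in $L^0(\mu)$ to $X(r,s,\cdot),X(s,t,\cdot),X(r,t,\cdot)$; after extracting a further subsequence we may assume all three converge $\mu$-a.e.\ as well. Then the right-hand side above converges $\mu$-a.e.\ to $X(r,t,x)$, and it remains only to show that $X_{\ez_k}(s,t,X_{\ez_k}(r,s,x))\to X(s,t,X(r,s,x))$ in $L^0(\mu)$.

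I would write $X_{\ez_k}(s,t,X_{\ez_k}(r,s,x))-X(s,t,X(r,s,x))=A_k(x)+B_k(x)$, where
$$A_k(x):=\big[X_{\ez_k}(s,t,\cdot)-X(s,t,\cdot)\big]\big(X_{\ez_k}(r,s,x)\big),\qquad B_k(x):=X(s,t,X_{\ez_k}(r,s,x))-X(s,t,X(r,s,x)).$$
For $B_k$ one repeats the argument of Lemma \ref{measure-convergence} with the fixed measurable map $F=X(s,t,\cdot)$ composed along the sequence $X_{\ez_k}(r,s,\cdot)\to X(r,s,\cdot)$; the only extra ingredient is that the image measures $X_{\ez_k}(r,s,\cdot)_{\#}\mu$ charge sets of small $\mu$-measure by a small amount uniformly in $k$, which follows from Theorem \ref{quant} applied to $b_{\ez_k}$ together with the bound $\|\div_\mu b_{\ez_k}(\cdot,\cdot)\|_{\Exp_\mu(\frac{L}{\log L})}\le e\,\|\div_\mu b(\cdot,\cdot)\|_{\Exp_\mu(\frac{L}{\log L})}$. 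Hence $B_k\to 0$ in $L^0(\mu)$. For $A_k$, put $g_k:=X_{\ez_k}(s,t,\cdot)-X(s,t,\cdot)$, so $g_k\to 0$ $\mu$-a.e.; by Egorov's theorem, for each $\delta>0$ there is a measurable set $G_\delta$ with $\mu(\rn\setminus G_\delta)<\delta$ on which $g_k\to 0$ uniformly. Enlarging $\rn\setminus G_\delta$ to an open set $N_\delta$ with $\mu(N_\delta)\le 2\mu(\rn\setminus G_\delta)$ as in the proof of Lemma \ref{measure-convergence}, for every $c>0$ and every $k$,
$$\mu\big(\{x:|A_k(x)|>c\}\big)\le\mu\big(\{x:X_{\ez_k}(r,s,x)\in N_\delta\}\big)+\mu\big(\{x:X_{\ez_k}(r,s,x)\in G_\delta,\ \sup_{G_\delta}|g_k|>c\}\big);$$
the second term is $0$ once $\sup_{G_\delta}|g_k|\le c$, and the first term is at most $\exp\{-(\log\frac{1}{\mu(N_\delta)})^{\exp(-C\int_r^s\beta(\tau)\,d\tau)}\}$, uniformly in $k$, by Theorem \ref{quant} again. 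Letting $k\to\infty$ and then $\delta\to0$ shows $A_k\to 0$ in $L^0(\mu)$.

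Combining the two estimates gives $X_{\ez_k}(s,t,X_{\ez_k}(r,s,x))\to X(s,t,X(r,s,x))$ in $L^0(\mu)$; since this sequence also equals $X_{\ez_k}(r,t,x)$, which converges $\mu$-a.e.\ to $X(r,t,x)$, uniqueness of limits yields $X(s,t,X(r,s,x))=X(r,t,x)$ for a.e.\ $x\in\rn$. The backward flow is handled in exactly the same way, using that $\tilde X_{\ez_k}\to\tilde X$ $\mu$-a.e.\ (as in the discussion following Lemma \ref{flow-eq}) and that the image measures $\tilde X_{\ez_k}(s,t,\cdot)_{\#}\mu$ obey the same uniform density bounds. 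The only genuine difficulty is that the left-hand side of the smooth semigroup identity is a true double limit---the $k$-th approximate map $X_{\ez_k}(s,t,\cdot)$ evaluated at the $k$-th approximate point $X_{\ez_k}(r,s,x)$---so Lemma \ref{measure-convergence}, which handles the composition of a \emph{fixed} function with the approximating flow, does not apply directly; the splitting $A_k+B_k$, Egorov's theorem, and the uniform-in-$k$ quantitative bound of Theorem \ref{quant} are precisely what legitimize the passage to the limit.
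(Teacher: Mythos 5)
Your argument is correct and follows essentially the same route as the paper: you pass to the limit in the smooth semigroup identity $X_{\ez_k}(s,t,X_{\ez_k}(r,s,x))=X_{\ez_k}(r,t,x)$, split the error into the same two pieces the paper calls $I$ and $II$ (your $A_k$ and $B_k$), handle $B_k$ via Lemma \ref{measure-convergence}, and control $A_k$ using the uniform-in-$k$ quantitative bound of Theorem \ref{quant}. The only minor deviation is that for $A_k$ you invoke Egorov's theorem on $g_k:=X_{\ez_k}(s,t,\cdot)-X(s,t,\cdot)$, whereas the paper bounds directly via the set $E_{k,c}=\{x:|g_k(x)|>c\}$ (which has small measure by convergence in measure) and applies the quantitative estimate to $\int\chi_{E_{k,c}}(X_{\ez_k}(r,s,x))\,d\mu$; both implementations rest on the same idea and are equally valid.
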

\begin{proof}
Let $0\le s\le t\le T$. By the proof of part (a) of the Main theorem from the last section, we know such $X(s,t,x)$ can be approximated by $X_{\ez_k}(s,t,x)$.
Notice that by the semigroup structure of
$X_{\ez_k}$, we have for each $0\le r\le s\le t\le T$ and a.e.
$x\in \rn$, it holds
$$X(r,t,x)=\lim_{k\to\infty}X_{\ez_k}(r,t,x)=\lim_{k\to\infty}X_{\ez_k}(s,t,X_{\ez_k}(r,s,x)), \ a.e.\,x\in\rn.$$
Therefore, to prove the semigroup structure, it suffices to show
that
$$\lim_{k\to\infty}X_{\ez_k}(s,t,X_{\ez_k}(r,s,x))=X(s,t, X(r,s,x)),\ a.e.\,x\in\rn.$$
Write
\begin{eqnarray*}
&&\left|X_{\ez_k}(s,t,X_{\ez_k}(r,s,x))-X(s,t,X(r,s,x))\right|\\
&&\le \left|X_{\ez_k}(s,t,X_{\ez_k}(r,s,x))-X(s,t,
X_{\ez_k}(r,s,x))\right|+ \left|X(s,t,
X(r,s,x))-X(s,t,X_{\ez_k}(r,s,x))\right|=:I+II.
\end{eqnarray*}
By Lemma \ref{mc-flow}, we see that $X_{\ez_k}(s,t, \cdot)$
converges to $X(s,t,\cdot)$ in measure. Let $c>0$ be fixed. Then for
each $\gz>0$, there exists $k_\gz$, such that for $k>k_\gz$, it
holds
$$\mu\left(\{x:\,|X_{\ez_k}(s,t,x)-X(s,t,x)|>c\}\right)<\gz.$$
Let $E_{k,c}=\{x:\,|X_{\ez_k}(s,t,x)-X(s,t,x)|>c\}$. Recall that by
{Lemma \ref{measure-control}}, for any measurable set $E$ with sufficient
small measure, it holds
$$
\left|\log\log\log\left(
\frac{1}{\int_\rn\chi_E(X_{\ez_k}(r,s,x))\,d\mu}\right)-
\log\log\log\left(\frac{1}{\mu(E)}\right)\right|\leq
C\int_r^s\beta(h)\,dh,
$$
since $\div_\mu b_{\ez_k}$ has uniform bound in $\Exp_\mu(\frac{L}{\log L})$.
We then can conclude that
\begin{eqnarray*}
\mu\left(\{x:\,|X_{\ez_k}(s,t,X_{\ez_k}(r,s,x))-X(s,t,
X_{\ez_k}(r,s,x))|>c\}\right)
&&=\int_{\rn}\chi_{E_{k,c}}(X_{\ez_k}(r,s,x))\,d\mu\\
&&\le \exp\left\{-\left(\log\left(\frac{1}{\mu(E_{k,c})}\right)\right)^{\exp\{-C\int_r^s\beta(h)\,dh\}}\right\}\\
&&\le \exp\left\{-\left(\log\left(\frac{1}{\gz}\right)\right)^{\exp\{-C\int_0^T\beta(h)\,dh\}}\right\},
\end{eqnarray*}
which implies that
$$\lim_{k\to\infty}\mu\left(\{x:\,|X_{\ez_k}(s,t,X_{\ez_k}(r,s,x))-X(s,t,
X_{\ez_k}(r,s,x))|>c\}\right)=0.$$ On the other hand, using Lemma
\ref{measure-convergence}, we see that
$X(s,t,X_{\ez_k}(r,s,x))$ converges to $X(s,t,X(r,s,x))$ in
measure. Therefore, we see that $X_{\ez_k}(s,t,X_{\ez_k}(r,s,x))$
converges in measure to $X(s,t,X(r,s,x))$, up to a subsequence, we
can conclude that
$$X(s,t,X(r,s,x))=\lim_{k\to\infty}X_{\ez_k}(s,t,X_{\ez_k}(r,s,x)),\ a.e.\, x\in\rn.$$
The same argument works for $\tilde X$, and the proof is completed.
\end{proof}

\noindent We are now in position to complete the proof of our Main
Theorem.

\begin{proof}[Proof of  Main Theorem (b)]
We already know that a flow $X$ associated to $b$ satisfying
properties of part (a) exists and is unique. Further, by Lemma
\ref{semigroup} we also know it has semigroup structure. Thus, in
order to prove that $X$ is a regular flow it just remains to show
that ${X(s,t,\cdot)_{\#}\,dx\ll\,dx}$. For each $\psi\in
C^\infty_c(\rn)$, we have
\begin{eqnarray*}
\int_\rn |\psi(X(s,t,x))|\,dx&&=\int_\rn (2\pi)^{n/2}|\psi(X(s,t,x))|\exp\left\{\frac{|x|^2}{2}\right\}\,d\mu(x)\\
&&=\int_\rn (2\pi)^{n/2}|\psi(y)|\exp\left\{\frac{|\tilde X(s,t,y)|^2}{2}\right\}K_{s,t}(y)\,d\mu(y),
\end{eqnarray*}
where $\tilde X(s,t,y)$
is the inverse map of $X(s,t,y)$ as indicated in \eqref{identity-flow}.
From the assumption $$\frac{|b|}{1+|x|\log^+|x|}\in L^1(0,T;L^\infty),$$
we can see that
$\{\,\tilde X(s,t,y):\,y\in \supp\psi\}$ is bounded in $[s,T]\times \rn$ . Therefore,
\begin{eqnarray*}
\int_\rn |\psi(X(s,t,x))|\,dx&&\le C(b,s,t,\psi)
\int_\rn |\psi(y)|K_{s,t}(y)\,dy,
\end{eqnarray*}
and hence, $X(s,t,\cdot)_{\#}\,dx\ll\,dx$. Apparently, the above arguments apply to $\tilde X$ and the same conclusion holds.
The proof is completed.
\end{proof}

As a result of the techniques we have used throughout this work
we get the following result about stability.

\begin{thm}\label{stability-dpl}
Let $b,\{b_k\}\in L^1(0,T;W^{1,1}_{\mathrm{loc}})$ satisfying
$$\frac{|b(t,x)|}{1+|x|\log^+|x|}, \frac{|b_k(t,x)|}{1+|x|\log^+|x|} \in L^1(0,T;L^\infty)$$
and
$$b_k\to b \quad \mbox{in} \ \Exp_\mu(L). $$
Assume that $\div_\mu\,b,\div_\mu\,b_k$ are uniformly bounded in
$L^1(0,T;\Exp_\mu(\frac{L}{\log L}))$ and $\div_\mu\,b_k$ converges
to $\div_\mu \,b$ in $L^1(0,T;L^1_\loc(\mu))$ . Let
$X(s,t,x),\{X_k(s,t,x)\}$, that satisfy properties from part (a) of the
Main Theorem, be the forward (or backward) flows generated from $b,\{b_k\}$ respectively.
Then
\begin{eqnarray}
\lim_{k\to\infty}\int_\rn\sup_{t\in [s,T]} \left|X(s,t,x)-X_k(s,t,x)\right|\,d\mu\to 0.
\end{eqnarray}
\end{thm}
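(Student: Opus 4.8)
The plan is to mimic the construction used to produce the flow $X$ in Section 4, but run it simultaneously for $b$ and all the $b_k$, and then quantify the convergence. First I would pass to the regularized fields: set $b_\ez = P_\ez b$ and $b_{k,\ez} = P_\ez b_k$, and let $X_\ez(s,t,x)$, $X_{k,\ez}(s,t,x)$ be the corresponding smooth flows from Theorem \ref{subexp-cr}. Because $P_\ez$ is an $L^1(\mu)$-contraction commuting with $\div_\mu$ up to the factor $e^\ez$, the hypotheses pass uniformly to the regularized fields: $\div_\mu b_\ez$ and $\div_\mu b_{k,\ez}$ stay uniformly bounded in $L^1(0,T;\Exp_\mu(\frac{L}{\log L}))$, the growth bound \eqref{hyp-b-1} is preserved by Lemma \ref{molifier}, and $b_{k,\ez}\to b_\ez$ in $\Exp_\mu(L)$ for each fixed $\ez$ (and uniformly in $k$ as $\ez\to0$, since $P_\ez$ is a contraction on $\Exp_\mu(L)$). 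Hence, by Theorem \ref{subexp-cr}, the density functions $K_{s,t,\ez}$ and $K^k_{s,t,\ez}$ are uniformly bounded in $L^{\Phi_\alpha}(\mu)$ for a fixed $\alpha>0$ depending only on the uniform bound on the divergences, and thus uniformly bounded in $L\log L\log\log L(\mu)$.

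Next I would estimate $\int_\rn \sup_{t\in[s,T]}|X_\ez(s,t,x)-X_{k,\ez}(s,t,x)|\,d\mu$ for the smooth flows. Subtracting the integral equations,
\begin{align*}
|X_\ez(s,t,x)-X_{k,\ez}(s,t,x)| &\le \int_s^t |b_\ez(r,X_\ez(s,r,x))-b_{k,\ez}(r,X_\ez(s,r,x))|\,dr\\
&\quad + \int_s^t |b_{k,\ez}(r,X_\ez(s,r,x))-b_{k,\ez}(r,X_{k,\ez}(s,r,x))|\,dr,
\end{align*}
and integrating in $\mu$, the first term is controlled via change of variables, the $L^{\Phi_\alpha}(\mu)\subset L\log L(\mu)$ embedding, Lemma \ref{duality}(ii), and the uniform density bound, by $C\int_s^T\|b_\ez(r,\cdot)-b_{k,\ez}(r,\cdot)\|_{\Exp_\mu(L)}\,dr$, which tends to $0$ as $k\to\infty$ for fixed $\ez$. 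The second term needs the $W^{1,1}$-regularity of $b_{k,\ez}$: using the smoothness of the fields one derives a Gronwall-type inequality involving $\int_s^t \nz b_{k,\ez}(r,\cdot)$ along the flow; after integrating in $\mu$ and using the uniform density bounds together with Lemma \ref{duality}, one obtains a bound of the form $C\int_\rn\sup_{r\in[s,T]}|X_\ez(s,r,x)-X_{k,\ez}(s,r,x)|\,d\mu$ times a quantity uniformly integrable in $r$; the standard DiPerna--Lions/Crippa--De Lellis log-truncation trick (splitting $\log\frac{|X_\ez-X_{k,\ez}|}{c}$) then closes the estimate and yields that $\int_\rn\sup_{t}|X_\ez(s,t,x)-X_{k,\ez}(s,t,x)|\,d\mu\to0$ as $k\to\infty$, uniformly in $\ez$. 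Alternatively, and more cheaply, since $X_{\ez_j}(s,\cdot,\cdot)\to X(s,\cdot,\cdot)$ and $X_{k,\ez_j}(s,\cdot,\cdot)\to X_k(s,\cdot,\cdot)$ in $L^0(\mathcal L^1\times\mu)$ along the subsequences from Lemma \ref{mc-flow}, one can argue directly at the level of the limit flows: $X$ and $X_k$ both solve the integral equation \eqref{inverse-flow}-type identities of Lemma \ref{flow-eq}, so subtract those, integrate in $\mu$, split $II=II_a+II_b$ exactly as in the proof of Lemma \ref{flow-eq} using $b_M=\min\{\max\{b,-M\},M\}$, bound the truncation errors by $\int_\rn\int_s^T|b-b_M|(r,X(s,r,x))\,dr\,d\mu + (\text{same for }X_k) < \gz$ via the uniform density bound and Lemma \ref{duality}, and handle the truncated difference by Lemma \ref{measure-convergence} plus dominated convergence, while the $b_k$-versus-$b$ discrepancy contributes $C\int_s^T\|b_k(r,\cdot)-b(r,\cdot)\|_{\Exp_\mu(L)}\,dr\to0$.

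Finally, to upgrade from convergence of $X(s,t,x)-X_k(s,t,x)$ in $L^1(\mu)$ for each fixed $t$ to convergence of the $\sup$ over $t\in[s,T]$ inside the integral, I would use that $t\mapsto X(s,t,x)$ and $t\mapsto X_k(s,t,x)$ are absolutely continuous for a.e.\ $x$ with $\mu$-integrable-in-$x$ derivatives $b(t,X(s,t,x))$ and $b_k(t,X_k(s,t,x))$ (as established in Lemma \ref{flow-eq} and its $b_k$-analogues, using the uniform density bounds and Lemma \ref{duality}), so that $\sup_{t\in[s,T]}|X(s,t,x)-X_k(s,t,x)| \le |X(s,s,x)-X_k(s,s,x)| + \int_s^T |b(r,X(s,r,x))| + |b_k(r,X_k(s,r,x))|\,dr$ provides a $\mu$-integrable dominating function uniform in $k$; combined with the pointwise (in $x$) convergence $\sup_{t}|X(s,t,x)-X_k(s,t,x)|\to0$ obtained along a subsequence, dominated convergence gives the claim, and since the limit is the same for every subsequence the full sequence converges. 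The main obstacle is the second term in the flow-difference estimate — controlling $|b_k(r,X(s,r,x))-b_k(r,X_k(s,r,x))|$ uniformly in $k$ despite $b_k$ being only Sobolev with merely subexponentially integrable divergence; this is precisely where the quantitative transport estimate (Theorem \ref{quant}, via the uniform $L^{\Phi_\alpha}(\mu)$ control of the densities) must be invoked, exactly as in the passage-to-the-limit arguments of Lemmas \ref{mc-flow}--\ref{flow-eq}, rather than any naive Gronwall argument which would require a gradient bound the hypotheses do not supply.
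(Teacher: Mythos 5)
Your proposal correctly identifies the overall architecture (first get $X_k\to X$ in measure, then subtract the integral equations and pass to the limit as in Lemma~\ref{flow-eq}, using the uniform $L^{\Phi_\alpha}(\mu)$ density bounds and Lemma~\ref{duality}), and you are right that a Gronwall or Crippa--De~Lellis log-truncation argument at the regularized level cannot close under the paper's hypotheses, since no $L^p$ control on $\nabla b_k$ is available. The trouble is that your ``alternative, cheaper'' route, which is the one closest to what the paper actually does, quietly assumes the single non-trivial input without proving it: namely that $X_k(s,t,\cdot)\to X(s,t,\cdot)$ in measure. You point out that $X_{\ez_j}\to X$ and $X_{k,\ez_j}\to X_k$ in $L^0$ by Lemma~\ref{mc-flow}, but that gives no relation between $X_k$ and $X$. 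Yet when you later invoke ``Lemma~\ref{measure-convergence} plus dominated convergence'' to send $\int\int|b_M(r,X_k)-b_M(r,X)|\,dr\,d\mu\to0$, you need exactly this in-measure convergence of $X_k$ to $X$; without it, the split of $II$ does not converge.

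The paper supplies this missing ingredient as the first and central step of its proof, via the DiPerna--Lions/renormalization mechanism rather than any quantitative bound. It observes that for each bounded $\beta\in C^1$, the functions $\beta(X_k^i(s,t,x))$ and $\beta(X_k^i(s,t,x))^2$ are bounded weak solutions of the transport equation driven by $b_k$ with final data $\beta(x_i)$ and $\beta(x_i)^2$; by weak-$\ast$ compactness in $L^\infty(s,T;L^\infty)$ one extracts limits $v^i_\beta$, $w^i_\beta$, the hypotheses $b_k\to b$ and $\div_\mu b_k\to\div_\mu b$ in $L^1_{\loc}(\mu)$ let one pass to the limit in the weak formulation so that $v^i_\beta$, $w^i_\beta$ solve the transport equation driven by $b$, the renormalization property gives $(v^i_\beta)^2=w^i_\beta$ hence strong $L^2(\mu)$ convergence, and uniqueness (Theorem~\ref{tran-main}) identifies $v^i_\beta=\beta(X^i)$; arbitrariness of $\beta$ then yields $X_k\to X$ in measure exactly as in the proof of Lemma~\ref{mc-flow}. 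Once that is in hand, the rest of your second route goes through essentially as you describe, and the final upgrade to $\sup_{t\in[s,T]}$ inside the $\mu$-integral via the integrable dominating function $\int_s^T(|b(r,X(s,r,\cdot))|+|b_k(r,X_k(s,r,\cdot))|)\,dr$ is fine. So: the gap is the unproved in-measure convergence $X_k\to X$, which is not a consequence of the quantitative Theorem~\ref{quant} but of the qualitative weak-$\ast$ compactness plus renormalization plus uniqueness argument; your proposal gestures at the Lemma~\ref{mc-flow}--\ref{flow-eq} machinery but never carries out this step, and your first regularized-Gronwall route is, as you yourself concede, a dead end.
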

\begin{proof}
For each bounded function $\beta\in C^1(\rr,\rr)$, $\beta(X^i_k(s,t,x))$, $1\le i\le n$, is the solution to the
Cauchy problem of the transport equation associated to the vector field $b_k$, with the final value  $\beta(x^i)$.
By weak-$\ast$ compactness in $L^\infty(s,T;L^\infty)$,
we see that there exists a subsequence $\{\beta(X^i_{k_j})\}_j$  converging to a function $\widetilde X$, which is a solution
 to the
Cauchy problem of the transport equation associated to the vector field $b$, with the initial value  $\beta(x^i)$.
By the uniqueness, we get that $\widetilde X=\beta(X^i(s,t,x))$.

By the well-posedness and the renormalization property of the transport equation, we deduce that,  indeed, $\beta(X^i_{k})$
converges in measure to $\beta(X(s,t,x))$.
Following the same argument  as in Lemma \ref{mc-flow}, we see that
$X_{k}$ converges in measure to $X(s,t,x)$.

Observing this, and the fact
\begin{eqnarray*}
&&\int_\rn\sup_{t\in [s,T]} \left|X(s,t,x)-X_k(s,t,x)\right|\,d\mu\\
&&\quad\le \int_\rn\int_s^T|b(r,X(s,r,x))-b_k(r,X_k(s,r,x))|\,dr\,d\mu\\
&&\quad\le \int_\rn \int_s^T|b(r,X(s,r,x))-b(r,X_k(s,r,x))|\,dr\,d\mu\\
&&\quad\quad+\int_\rn\int_{{s}}^T|b(r,X_k(s,r,x))-b_k(r,X_k(s,r,x))|\,dr\,d\mu,
\end{eqnarray*}
we can follow the proof of Lemma \ref{flow-eq} to conclude that
\begin{eqnarray*}
\lim_{k\to\infty}\int_\rn\sup_{t\in [s,T]} \left|X(s,t,x)-X_k(s,t,x)\right|\,d\mu\to 0.
\end{eqnarray*}
The proof is completed.
\end{proof}

\begin{rem}\label{manylogs}\rm
From the proofs of the paper and our previous paper \cite{cjmo}, it
looks that one can strengthen the borderline condition on the
divergence of vector fields $b$ a bit more as
$$\div_\mu b\in  L^1\left(0,T;\Exp_\mu\left(\frac{L}{\log L\,\log\log L\,\dots\,\underbrace{\log\cdots\log}_{k}L} \right)\right)$$
in order to get well-posedness of the ODE. However, since this would
require much more tedious calculations, we will not go through it
here.
\end{rem}

\begin{rem}\rm
It will be interesting to know if one can adopt recent developments
of regular Lagrangian flows (cf. \cite{ac14}) and use the continuity
equation rather than the transport equation, to improve the Main
theorem.
\end{rem}

\subsection*{Acknowledgment}
\hskip\parindent {We thank the anonymous referee for their helpful comments and
suggestions, which significantly contributed to improve the quality of this paper.}
Albert Clop, Joan Mateu and Joan Orobitg were
partially supported by Generalitat de Catalunya (2014SGR75) and
Ministerio de Econom\'\i a y Competitividad (MTM2013-44699). Albert
Clop was partially supported by the Programa Ram\'on y Cajal (Spain). Renjin
Jiang was partially supported by National Natural Science Foundation
of China (NSFC 11301029). All authors were partially supported by
Marie Curie Initial Training Network MAnET (FP7-607647).

\noindent
\textit{Albert Clop, Joan Mateu and Joan Orobitg}

\vspace{0.1cm}
\noindent
Departament de Matem\`atiques, Facultat de Ci\`encies,\\
Universitat Aut\`onoma de Barcelona\\
08193 Bellaterra (Barcelona), CATALONIA.

\vspace{0.3cm}

\noindent
\textit{Renjin Jiang}

\vspace{0.1cm}
\noindent
School of Mathematical Sciences, Beijing Normal University, Laboratory of Mathematics and Complex Systems,
Ministry of Education, 100875, Beijing, CHINA

and

\noindent
Departament de Matem\`atiques, Facultat de Ci\`encies,\\
Universitat Aut\`onoma de Barcelona\\
08193 Bellaterra (Barcelona), CATALONIA.

\vspace{0.2cm}
\noindent{\it E-mail addresses}:\\
\texttt{albertcp@mat.uab.cat}\\
\texttt{rejiang@bnu.edu.cn}\\
\texttt{mateu@mat.uab.cat}\\
\texttt{orobitg@mat.uab.cat}
\end{document}